\def\theenumi{\arabic{enumi}}
\def\theenumii{\alph{enumii}}
\def\p@enumii{\theenumi.}
\def\theenumiii{\arabic{enumiii}}
\def\p@enumiii{(\theenumi)(\theenumii)}
\def\p@enumiv{\p@enumiii.\theenumiii}
\newtheorem{construction}{Construction}[section]
\newtheorem{theorem}[construction]{Theorem}
\newtheorem{corollary} [construction]{Corollary}
\newtheorem{definition} [construction]{Definition}
\newtheorem{conditions}[construction]{Conditions}
\newtheorem{lemma} [construction]{Lemma}
  \newcommand{\HWP}{\ensuremath{\mbox{\sf HWP}}}
  \renewcommand{\-}{\ensuremath{\text{--}}}
\providecommand{\keywords}[1]{\textbf{\textit{Keywords---}} #1}
\begin{document}

\title{On the Hamilton-Waterloo problem: the case of two cycles sizes of different parity}
\author{Melissa Keranen\\
Michigan Technological University\\
Department of Mathematical Sciences\\
Houghton, MI 49931, USA\\
\\
Adri\'{a}n Pastine\\
Universidad Nacional de San Luis\\
Departamento de Matem\'{a}tica\\
San Luis, Argentina}

\maketitle
\footnote{email addresses: Melissa Keranen (msjukuri@mtu.edu), Adri\'{a}n Pastine (adrian.pastine.tag@gmail.com)}
%%%%%%%%%%%%%%%%%%%%%%%%%%%%%%%%%%%%%%
\begin{abstract}
The Hamilton-Waterloo problem asks for a decomposition of the complete graph into $r$ copies of a 2-factor $F_{1}$
and $s$ copies of a 2-factor $F_{2}$ such that $r+s=\left\lfloor\frac{v-1}{2}\right\rfloor$.  If $F_{1}$ consists of $m$-cycles
and $F_{2}$ consists of $n$ cycles, then we call such a decomposition a $(m,n)-\HWP(v;r,s)$. The goal is to
find a decomposition for every possible pair $(r,s)$. In this paper, we show that for odd $x$ and $y$,
there is a $(2^kx,y)-\HWP(vm;r,s)$ if
$\gcd(x,y)\geq 3$, $m\geq 3$, and both $x$ and $y$ divide $v$, except possibly when $1\in\{r,s\}$.
\end{abstract}
\keywords{$2$-Factorizations, Hamilton-Waterloo Problem, Oberwolfach Problem, Cycle Decomposition, Resolvable Decompositions}
%%%%%%%%%%%%%%%%%%%%%%%%%%%%%%%%%%%%%%%
\section{Introduction}

The Oberwolfach problem asks for a decomposition of the complete graph $K_v$ into $\frac{v-1}{2}$ copies of a $2$-factor $F$. To achieve this decomposition, $v$ needs to be odd, because the vertices must have even degree. The problem with $v$ even asks for a decomposition of $K_v$ into $\frac{v-2}{2}$ copies of a $2$-factor $F$, and one copy of a $1$-factor. The uniform Oberwolfach problem (all cycles of the $2$-factor have the same size) has been completely solved by Alspach and Haagkvist \cite{AH} and Alspach, Schellenberg, Stinson and Wagner \cite{ASSW}. The non-uniform Oberwolfach problem has been studied as well, and a survey of results up to 2006 can be found in \cite{BR}.  Furthermore, one can refer to \cite{BS2,BDD,BDP,RT,T} for more recent results.

In \cite{Liu1} Liu first worked on the generalization of the Oberwolfach problem to equipartite graphs. Here we are seeking to decompose the complete equipartite graph $K_{(m:n)}$ with $n$ partite sets of size $m$ each into $\frac{(n-1)m}{2}$ copies of a $2$-factor $F$. Here $(n-1)m$ has to be even. In \cite{HH} Hoffman and Holliday worked on the equipartite generalization of the Oberwolfach problem when $(n-1)m$ is odd, decomposing into $\frac{(n-1)m-1}{2}$ copies of a $2$-factor $F$, and one copy of a $1$-factor. The uniform Oberwolfach problem over equipartite graphs has since been completely solved by Liu \cite{Liu2} and Hoffman and Holliday \cite{HH}. For the non-uniform case, Bryant, Danziger and Pettersson \cite{BDP} completely solved the case when the $2$-factor is bipartite. In particular, Liu showed the following.

\begin{theorem}\label{equip}
 {\normalfont\cite{Liu2}}  For $m\geq 3$ and $u\geq 2$, $K_{(h:u)}$ has a resolvable $C_m$-factorization if 
 and only if $hu$ is divisible by $m$, $h(u-1)$ is even, $m$ is even if $u=2$, and $(h,u,m) \not \in 
 \{(2,3,3), (6,3,3),(2,6,3),\\(6,2,6)\}$.
\end{theorem}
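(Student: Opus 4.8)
The plan is to separate the (routine) necessity from the (substantial) sufficiency. For necessity: a $C_m$-factor is a spanning $2$-regular subgraph whose components are all $m$-cycles, so it covers every vertex and $m\mid hu$ is forced; since a $C_m$-factorization is in particular a $2$-factorization, the common vertex degree $h(u-1)$ must be even; and for $u=2$ the host $K_{(h:2)}$ is the bipartite graph $K_{h,h}$, which contains no odd cycle, so $m$ must be even. The four triples $(2,3,3),(6,3,3),(2,6,3),(6,2,6)$ live in tiny graphs, so I would dispatch them by direct counting/parity arguments or, at worst, a finite case check; the first three are precisely the parameters for which a resolvable group-divisible design with block size $3$ and group type $h^u$ fails to exist, and the last is the non-existence of a resolvable $C_6$-factorization of $K_{6,6}$.

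For sufficiency I would run a recursion over a fixed stock of directly built ingredients. The base ingredients are: $h=1$, where $K_{(1:u)}=K_u$ with $u$ odd and $m\mid u$, settled affirmatively by the uniform Oberwolfach results \cite{AH,ASSW} with no exceptions in range; $u=2$, the resolvable even-cycle factorization of $K_{h,h}$ ($m$ even, $m\mid 2h$); the small-modulus cases, especially $m=3$ (resolvable triangle designs of type $h^u$) and $m=4$; and a spread of prime-order configurations built by starter--adder/difference methods --- writing $m=2^{k}\ell$ with $\ell$ odd and $hu=mt$, one chooses a starter $2$-factor whose edge differences tile the relevant difference set in $\mathbb{Z}_{hu}$ (or $\mathbb{Z}_h\times\mathbb{Z}_u$) and develops it, inserting a short orbit or a $1$-rotational point when $t$ is even. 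The recursive engines reducing the general case to these are: a composition lemma, taking $u=u_1u_2$, under which $K_{(h:u)}$ splits edge-disjointly into $u_1$ vertex-disjoint copies of $K_{(h:u_2)}$ (stacked in parallel) together with one copy of $K_{(hu_2:u_1)}$; a vertex-inflation step, since $K_{(h_1:u)}[h_2]=K_{(h_1h_2:u)}$, which goes through whenever $C_m[h_2]$ itself has a resolvable $C_m$-factorization (the failure of this for $C_3[2]$ is exactly why $(2,3,3)$ is excluded); and, for even $h$, a frame-and-fill construction, building a $C_m$-frame of type $h^u$ (holey $C_m$-factors, $h/2$ of them missing each group) and filling the holes after adjoining a suitable set of auxiliary points.

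The hard part will be the bookkeeping that makes this recursion genuinely exhaustive, rather than any individual gadget. Every admissible $(h,u,m)$ must be driven down to a base ingredient while, at each intermediate stage, preserving $m\mid h'u'$, keeping $h'(u'-1)$ even, respecting the bipartite restriction whenever $u'=2$, and never landing on one of the four forbidden triples or on a small parameter where the required seed factorization does not itself exist. The parities interact delicately: the frame engine is available only for even $h$, so the odd-$h$ cases must be reached purely through the composition lemma, inflation, and the directly constructed seeds, and the parity of $m$ governs which seeds are on offer. Designing the seed list so that it is simultaneously explicit and rich enough to anchor every residue class --- in particular the sporadic small cases clustered around the four exceptions, such as small even $h$ with $m\in\{3,6\}$ --- is where I expect essentially all of the real work and ingenuity to go.
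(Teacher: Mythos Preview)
The paper does not prove this theorem at all: it is quoted verbatim from Liu \cite{Liu2} as a known result, with no argument given, and is then invoked as a black box inside the proof of Theorem~\ref{lemmausingliu}. So there is no ``paper's own proof'' to compare your attempt against.

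As for the content of your sketch, it is a reasonable high-level outline of the sort of recursive machinery that Liu's original paper deploys (base ingredients plus inflation and composition/frame constructions), and your discussion of necessity is correct. But what you have written is a plan, not a proof: you acknowledge yourself that the ``hard part will be the bookkeeping that makes this recursion genuinely exhaustive,'' and you do not actually carry any of that bookkeeping out, nor do you construct any of the seed factorizations or frames you invoke. For a theorem whose entire difficulty lies precisely in that exhaustive case analysis and in the explicit direct constructions for the small anchors, a sketch at this level of abstraction does not constitute a proof. If the intent was to reproduce Liu's argument, you would need to actually exhibit the base designs and verify the recursion tree; if the intent was merely to use the result, a citation (as the paper does) is the appropriate move.
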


The Hamilton-Waterloo problem is a variation of the Oberwolfach problem, in which we consider two $2$-factors, $F_1$ and $F_2$. It asks for a factorization of $K_v$ when $v$ is odd or $K_v-I$ ($I$ is a $1$-factor) when $v$ is even into $r$ copies of $F_1$ and $s$ copies of $F_2$ such that $r+s=\left\lfloor\frac{v-1}{2}\right\rfloor$, where $F_1$ and $F_2$ are two $2$-regular graphs on $v$ vertices. Most of the results for the Hamilton-Waterloo problem are uniform, meaning $F_1$ consists of cycles of size $m$ ($C_{m}$-factors), and $F_2$ consists of cycles of size $n$ ($C_{n}$-factors). We refer to a decomposition of $K_v$ into $r$ $C_{m}$-factors and $s$ $C_{n}$-factors as a $(m,n)-\HWP(v;r,s)$.
The case where both $m$ and $n$ are odd positive integers and $v$ is odd is almost completely solved by \cite{BDT,BDT2}; and
if $m$ and $n$ are both even, then the problem again is almost completely solved (see \cite{BD,BDD}). However, if $m$ and
$n$ are of differing parities, then we only have partial results.  Most of the work has been done in the case where one of the cycle sizes is constant. The case of $(m,n)=(3,4)$ is solved in \cite{BB,DQS,OO,WCC}.
Other cases which have been studied include $(m,n)=(3,v)$ \cite{LS}, $(m,n)=(3,3x)$ \cite{AKKPO}, and $(m,n)=(4,n)$ \cite{KO,OO} .

In this paper, we consider the case of $m$ and $n$ being of different parity.  This case has gained attention recently, where it has been shown that the necessary conditions are sufficient for a $(m,n)-\HWP(v;r,s)$ whenever $m \mid n$, $v>6n>36m$, and $s \geq 3$ \cite{BDT3}. We provide a complementary result to this in our main theorem, which covers cases in which $m \nmid n$ and solves a major portion of the problem.

\begin{theorem}
\label{main}
Let $x,y,v,k$ and $m$ be positive integers such that:
\begin{enumerate}[i)]
\item $v,m\geq 3$,
\item $x,y$ are odd,
\item $\gcd(x,y)\geq 3$,
\item $x$ and $y$ divide $v$.
\item $4^k$ divides $v$.
\end{enumerate}
Then there exists a $(2^kx,y)\-\HWP(vm;r,s)$ for every pair $r,s$ with 
$r+s=\left\lfloor(vm-1)/2\right\rfloor$, $r,s\neq 1$.
\end{theorem}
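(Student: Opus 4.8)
The plan is to treat this as a product (blow--up) construction over a resolvable equipartite factorization. Hypothesis (v) forces $v$, and hence $vm$, to be even, so the goal is a factorization of $K_{vm}-I$ into $r$ copies of a $C_{2^kx}$-factor and $s$ copies of a $C_y$-factor. Partition the $vm$ vertices into $m$ groups of size $v$; this is where $m\geq 3$ is used, since with $m=2$ the between-groups graph is bipartite and cannot carry a $C_y$-factor with $y$ odd. Take the removed $1$-factor $I$ to be the union of a $1$-factor in each group, so that
\[
K_{vm}-I \;=\; \Bigl(\bigoplus_{i=1}^{m}(K_v-I_i)\Bigr)\;\oplus\;K_{(v:m)}.
\]
A global $2$-factor built from the first summand alone restricts to a $2$-factor of each $K_v-I_i$, and to be a Hamilton--Waterloo factor it must have the same cycle length in every group; so the first summand can supply $a$ copies of a $C_{2^kx}$-factor and $\tfrac{v-2}{2}-a$ copies of a $C_y$-factor once $K_v-I$ has the two corresponding uniform factorizations. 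These exist by the solution of the uniform Oberwolfach problem \cite{AH,ASSW}, using $2^kx\mid v$, $y\mid v$ (both consequences of (iii)--(v)), both cycle lengths at least $3$, and $v\geq 8$. The remaining $\tfrac{v(m-1)}{2}$ factors come from $K_{(v:m)}$, so the whole problem reduces to a Hamilton--Waterloo statement for the complete equipartite graph:
\[
(\star)\qquad K_{(v:m)} \text{ decomposes into } c \text{ copies of a } C_{2^kx}\text{-factor and } d \text{ copies of a } C_y\text{-factor whenever } c+d=\tfrac{v(m-1)}{2},\ c,d\neq 1.
\]

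Granting $(\star)$, the general pair $(r,s)$ is obtained by a short routing argument. If $r\leq \tfrac{v(m-1)}{2}$, make the within-group part entirely $C_y$ and let $K_{(v:m)}$ realize $(c,d)=(r,\tfrac{v(m-1)}{2}-r)$; if $r>\tfrac{v(m-1)}{2}$, make the within-group part entirely $C_{2^kx}$ and let $K_{(v:m)}$ realize $(c,d)=(r-\tfrac{v-2}{2},\,s)$. One checks that every admissible $(r,s)$ is covered by at least one of the two routes, and that the pairs missed by both are exactly those with $r=1$ or $s=1$; this is the origin of the exception $r,s\neq 1$, since a single minority factor cannot be produced by the method.

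It remains to prove $(\star)$, which is the technical core, via Theorem~\ref{equip} and the $\APT$ construction (an amalgamation/product construction). The uniform ends $c=0$ and $d=0$ are immediate from Theorem~\ref{equip} with $h=v$, $u=m$: both $2^kx$ and $y$ divide $vm$, $v(m-1)$ is even, $m\geq 3$, and since $v\geq 8$ the triples $(v,m,2^kx)$ and $(v,m,y)$ avoid the four exceptions listed there. For a genuine mix I would exploit $d:=\gcd(x,y)=\gcd(2^kx,y)\geq 3$, which divides both cycle lengths: take a resolvable short-cycle (roughly $C_{\gcd(x,y)}$-) factorization of $K_{(v:m)}$, or of a spanning subgraph whose complement is again factored by Theorem~\ref{equip}, and then amalgamate consecutive short factors along an auxiliary cyclic scheme of order at least $3$ --- blocks of $2^kx/d$ short factors being stitched into a $C_{2^kx}$-factor and blocks of $y/d$ into a $C_y$-factor --- distributing the two kinds of blocks to meet the prescribed $(c,d)$. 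The $2$-power in $2^kx$ is produced by a doubling step nested inside the amalgamation, and the room to iterate that doubling $k$ times is exactly what the hypothesis $4^k\mid v$ provides.

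The hard part is precisely this mixed equipartite construction. Forcing both cycle lengths into one factorization of $K_{(v:m)}$, in arbitrary proportions, requires a short-cycle resolvable backbone simultaneously compatible with amalgamation into $C_{2^kx}$-factors, with amalgamation into $C_y$-factors, and with the $\mathbb{Z}_m$-structure on the parts, together with the $2$-adic doubling that turns odd-length ingredients into length $2^kx$; the proportions at which $c$ or $d$ would equal $1$ are exactly those where there are too few short factors left to run the amalgamation, which is why they are excluded. A secondary chore is verifying that the finitely many exceptional triples of Theorem~\ref{equip} never obstruct any of the auxiliary equipartite factorizations invoked along the way.
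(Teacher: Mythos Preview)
Your global architecture matches the paper exactly: split $K_{vm}$ as $mK_v\oplus K_{(v:m)}$, use the uniform Oberwolfach theorem on the $m$ copies of $K_v$ (all $C_y$ or all $C_{2^kx}$), and route the pair $(r,s)$ through whichever choice keeps the equipartite piece away from $c=1$ and $d=1$. (Your stated split ``first route for $r\le v(m-1)/2$, second otherwise'' is not quite right---for instance $r=v(m-1)/2-1$ lands in the first range but forces $d=1$ there, and must be handled by the second route---but this is easily repaired.)

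The genuine gap is $(\star)$. What you wrote for the ``genuine mix'' is not a construction: ``blocks of $2^kx/d$ short factors stitched into a $C_{2^kx}$-factor'' cannot mean the union of that many $C_d$-factors of $K_{(v:m)}$, since that union is $(2\cdot 2^kx/d)$-regular, not $2$-regular, and a single $C_d$ on $d$ vertices cannot be turned into a longer cycle on those same $d$ vertices. The paper's mechanism is specific and is not an amalgamation of factors at all. Writing $z=\gcd(x,y)$, $x=x_1z$, $y=y_1z$, one $C_z$-factorizes $K_{(v/4^kx_1y_1\,:\,m)}$ via Theorem~\ref{equip} and then blows each vertex up by weight $4^kx_1y_1$; every $C_z$ becomes a copy of $C_{(4^kx_1y_1:z)}$, so $(\star)$ reduces to decomposing $\overrightarrow{C}_{(4^kx_1y_1:z)}$ into a prescribed mix of $\overrightarrow{C}_{2^kx_1z}$- and $\overrightarrow{C}_{y_1z}$-factors (Lemma~\ref{2xnandynlemma}). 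That decomposition is the content of Sections~\ref{section even case} and~\ref{section multivariate bijections}: the $2$-adic part comes from labelling the $4^k$ vertices of each part by the ring $\mathfrak{R}=\mathbb{Z}_{2^k}[x]/\langle x^2+x+1\rangle$ and using the maps $f_\alpha(y)=xy+\alpha$, whose key property $f_\alpha^3=\mathrm{id}$ makes $T_{4^k}(\alpha)$ a $\overrightarrow{C}_n$-factor, while twisting the last column to $H_{4^k}(\alpha,\beta)$ with $\alpha-\beta\in\{\pm1,\pm x,\pm x\pm1\}$ yields a $\overrightarrow{C}_{2^kn}$-factor; the odd parts $x_1,y_1$ are then incorporated via the partite product and the multivariate bijections of~\cite{KP}. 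Your ``doubling step nested inside the amalgamation'' does not capture this: the factor $2^k$ is produced in one shot from additive orders in $\mathfrak{R}$, not by $k$ iterated doublings of factors, and it is precisely this ring construction that explains why the hypothesis is $4^k\mid v$ rather than $2^k\mid v$.
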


%%%%%%%%%%%%%%%%%%%%%%%%%%%%%%%%%%%%%%%

\section{Preliminaries}
Let $G$ and $H$ be multipartite graphs. Then the \textit{partite product} of $G$ and $H$, $G
\otimes   H$ is defined by:
\begin{itemize}
\item $V(G\otimes   H)=\{(g,h,i)| (g,i)\in V(G)$ and $(h,i)\in V(H)\}$.
\item $E(G\otimes   H)=\{\{(g_1,h_1,i),(g_2,h_2,j)\}|\{(g_1,i),(g_2,j)\}\in E(G)$ and $\{(h_1,i),(h_2,j)\}
\in E(H)\}$.
\end{itemize}
where two vertices in $G$ (or $H$) $(g,i),(g,j)$ ($(h,i),(h,j)$) are in the same partite set in $G$ ($H$) 
if and only if $i=j$. The \textit{complete cyclic multipartite graph} $C_{(x:k)}$ is the graph with $k$ partite sets of size 
$x$, where two vertices $(g,i)$ and $(h,j)$ are neighbors if and only if $i-j=\pm 1\pmod{k}$, with 
subtraction being done modulo $k$. The \textit{directed complete cyclic multipartite graph} 
$\overrightarrow{C}_{(x:k)}$ is the graph with $k$ parts of size $x$, with arcs of the form $\big((g,i),
(h,i+1)\big)$ for every $0\leq g,h\leq x-1$, $0\leq i\leq k-1$.

In \cite{KP}, decompositions of $\overrightarrow{C}_{(x:k)}$, $x$ odd, 
into $C_{k}$-factors and $C_{xk}$-factors, and decompositions of $\overrightarrow{C}_{(4:k)}$ into 
$C_{k}$-factors and $C_{2k}$-factors were given. Then by using multivariate bijections, decompositions of 
$\overrightarrow{C}_{(xy:k)}$, $x$ and $y$ odd, into $C_{xk}$-factors and $C_{yk}$-factors were obtained. 
 This was used in conjunction with the following three result to produce the main theorems given in their paper.

\begin{lemma}[\cite{KP}]\label{productofbalanced}
Let $\overrightarrow{G}$ and $\overrightarrow{H}$ be a $\overrightarrow{C}_n$-factor and a $
\overrightarrow{C}_m$-factor of $\overrightarrow{C}_{(x:k)}$ and $\overrightarrow{C}_{(y:k)}$, 
respectively. Then $\overrightarrow{G}\otimes   \overrightarrow{H}$ is a $\overrightarrow{C}_{l}$-factor 
of $\overrightarrow{C}_{(xy:k)}$, where $l=\frac{nm}{gcd(n,m)}$.
\end{lemma}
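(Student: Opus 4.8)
The plan is to read off the cycle structure of $\overrightarrow{G}\otimes\overrightarrow{H}$ directly from that of the two factors, using the fact that in $\overrightarrow{C}_{(x:k)}$ every arc goes from part $i$ to part $i+1$ (indices mod $k$). Since $\overrightarrow{G}$ is a $\overrightarrow{C}_n$-factor of $\overrightarrow{C}_{(x:k)}$, it is a spanning subdigraph in which every vertex has in-degree and out-degree $1$, and it decomposes into directed $n$-cycles; in particular, following arcs of $\overrightarrow{G}$ out of any vertex returns to that vertex after exactly $n$ steps and no fewer (and, incidentally, $k\mid n$, since a directed cycle of $\overrightarrow{C}_{(x:k)}$ must traverse the parts in cyclic order). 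The same holds for $\overrightarrow{H}$ with $m$ in place of $n$ and $y$ in place of $x$.

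First I would check that $\overrightarrow{G}\otimes\overrightarrow{H}$ is a spanning subdigraph of $\overrightarrow{C}_{(xy:k)}$. Because $\overrightarrow{G}$ and $\overrightarrow{H}$ are spanning, $V(\overrightarrow{G}\otimes\overrightarrow{H})=\{(g,h,i):0\le g<x,\ 0\le h<y,\ 0\le i<k\}$, i.e.\ $k$ partite sets of size $xy$, which matches $\overrightarrow{C}_{(xy:k)}$. Reading the partite-product definition with arcs in place of edges, $\big((g,h,i),(g',h',i')\big)$ is an arc of $\overrightarrow{G}\otimes\overrightarrow{H}$ precisely when $\big((g,i),(g',i')\big)$ is an arc of $\overrightarrow{G}$ and $\big((h,i),(h',i')\big)$ is an arc of $\overrightarrow{H}$; each of these forces $i'=i+1$ and is uniquely determined (degree $1$), so $(g,h,i)$ has a unique out-neighbour $(g',h',i+1)$, and that vertex is joined to $(g,h,i)$ by an arc of $\overrightarrow{C}_{(xy:k)}$. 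The symmetric argument gives in-degree $1$, so $\overrightarrow{G}\otimes\overrightarrow{H}$ is $1$-in/$1$-out-regular and hence a vertex-disjoint union of directed cycles.

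It remains to show that every such cycle has length $l=nm/\gcd(n,m)$. Fix a vertex $(g,h,i)$ and follow arcs of $\overrightarrow{G}\otimes\overrightarrow{H}$ for $t$ steps: by the description of the arcs, the first coordinate (together with the part index) traces the walk of $\overrightarrow{G}$ out of $(g,i)$, while the second coordinate (with the part index) traces the walk of $\overrightarrow{H}$ out of $(h,i)$, and these two walks evolve independently. Hence the walk in the product returns to $(g,h,i)$ if and only if both component walks return, i.e.\ if and only if $n\mid t$ and $m\mid t$; the least such $t$ is $\operatorname{lcm}(n,m)=nm/\gcd(n,m)=l$. This value does not depend on the starting vertex, so every component of $\overrightarrow{G}\otimes\overrightarrow{H}$ is a directed $l$-cycle and the union is spanning, i.e.\ $\overrightarrow{G}\otimes\overrightarrow{H}$ is a $\overrightarrow{C}_l$-factor of $\overrightarrow{C}_{(xy:k)}$. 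The argument is essentially an unwinding of the definitions; the one point needing a little care is this last step, where one must justify that the two coordinate walks are truly independent so that the return time is governed by the least common multiple.
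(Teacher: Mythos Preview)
Your argument is correct. Note, however, that the paper does not prove this lemma at all: it is simply quoted from \cite{KP}, so there is no in-paper proof to compare against. What you wrote is the natural proof one would expect in \cite{KP} as well---check that the partite product is $1$-regular in and out, then observe that the two coordinate walks evolve independently so the return time is $\operatorname{lcm}(n,m)$. Nothing further is needed.
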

\begin{theorem}[Distribution,\cite{KP}]\label{distributive}
Let $G=\oplus_i G_i$ and $H=\oplus_j H_j$ be $k$-partite graphs. Then $G\otimes   H=\left(\oplus_i G_i
\right)\otimes  \left(\oplus_j H_j\right)$. Furthermore, the following distributive property holds:
\[
\left(\oplus_i G_i\right)\otimes  \left(\oplus_j H_j\right)=\oplus_i\left(G_i\otimes  \oplus_j H_j\right)=
\oplus_i\oplus_j\left( G_i\otimes   H_j\right)
\]
\end{theorem}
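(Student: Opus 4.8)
The plan is to prove the identity purely at the level of vertex sets and edge sets, since both the partite product $\otimes$ and the edge-decomposition operation $\oplus$ are defined combinatorially. Recall that writing $G=\oplus_i G_i$ means that the graphs $G_i$ all share the common vertex set $V(G)$ and that their edge sets partition $E(G)$, i.e.\ $E(G)=\bigsqcup_i E(G_i)$ as a disjoint union; similarly for $H=\oplus_j H_j$. Under this reading, the opening equality $G\otimes H=(\oplus_i G_i)\otimes(\oplus_j H_j)$ is immediate, being nothing more than the substitutions $\oplus_i G_i=G$ and $\oplus_j H_j=H$. The content of the theorem is therefore the two distributive equalities, and I would isolate the following one-sided lemma as the engine driving both.

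\emph{One-sided distributivity.} For $A=\oplus_i A_i$ and any $k$-partite graph $B$ on a compatible vertex set, $(\oplus_i A_i)\otimes B=\oplus_i(A_i\otimes B)$, and symmetrically $B\otimes(\oplus_i A_i)=\oplus_i(B\otimes A_i)$. To establish this I would first check that every summand on the right lives on the same vertex set as the left-hand side: by the definition of the partite product, $V(A_i\otimes B)$ depends only on $V(A_i)$ and $V(B)$, and since $V(A_i)=V(A)$ for every $i$, we obtain $V(A_i\otimes B)=V(A\otimes B)$ for all $i$. This is exactly what is required for $\oplus_i(A_i\otimes B)$ to be a legitimate edge-decomposition in the first place.

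Next I would verify that the edge sets agree. The key observation is that an edge $e=\{(g_1,b_1,a),(g_2,b_2,c)\}$ of $A\otimes B$ determines unambiguously a projected edge $\pi_A(e)=\{(g_1,a),(g_2,c)\}\in E(A)$ together with a projected edge $\pi_B(e)=\{(b_1,a),(b_2,c)\}\in E(B)$; conversely, $e\in E(A\otimes B)$ precisely when $\pi_A(e)\in E(A)$ and $\pi_B(e)\in E(B)$. Because $E(A)=\bigsqcup_i E(A_i)$ is a disjoint partition, $\pi_A(e)$ lies in $E(A_{i_0})$ for a unique index $i_0$; then $e\in E(A_{i_0}\otimes B)$, and for no other index $i\neq i_0$ can $e$ belong to $E(A_i\otimes B)$, since that would force $\pi_A(e)\in E(A_i)$ in contradiction with disjointness. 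Thus each edge of $A\otimes B$ lies in exactly one summand $A_i\otimes B$, giving $E(A\otimes B)=\bigsqcup_i E(A_i\otimes B)$; the reverse containments $E(A_i\otimes B)\subseteq E(A\otimes B)$ hold because $E(A_i)\subseteq E(A)$. This proves the lemma, and the symmetric statement follows by the identical argument with the two non-partite coordinates exchanged.

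Finally I would assemble the theorem by applying the lemma twice. Taking $A=G$, $A_i=G_i$, and $B=\oplus_j H_j$ yields $(\oplus_i G_i)\otimes(\oplus_j H_j)=\oplus_i\bigl(G_i\otimes(\oplus_j H_j)\bigr)$, which is the first distributive equality. Applying the symmetric form of the lemma inside each summand, now with $B=G_i$ and $\oplus_j H_j$ in the role of $A$, gives $G_i\otimes(\oplus_j H_j)=\oplus_j(G_i\otimes H_j)$, and substituting this into the previous line produces $\oplus_i\oplus_j(G_i\otimes H_j)$, the second equality. I expect no genuine analytic difficulty: the whole argument is a bookkeeping verification. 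The one step demanding care is the uniqueness claim---that each edge of the product lands in precisely one block $G_i\otimes H_j$---which rests entirely on the disjointness of the decompositions $E(G)=\bigsqcup_i E(G_i)$ and $E(H)=\bigsqcup_j E(H_j)$ and on the fact that the two projections of a product edge are well defined.
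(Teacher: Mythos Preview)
Your argument is correct: the verification that vertex sets agree and that the projection $e\mapsto(\pi_A(e),\pi_B(e))$ sends each product edge to a unique block of the partition $\bigsqcup_i E(A_i)\times\bigsqcup_j E(B_j)$ is exactly what is needed, and you have handled the bookkeeping cleanly. Note that the present paper does not actually supply a proof of this theorem---it is quoted from \cite{KP} and used as a black box---so there is no in-paper argument to compare against; your direct edge-set verification is the natural proof and is precisely the sort of argument one would expect the original reference to contain.
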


\begin{lemma}[\cite{KP}]\label{cvntokvm}
Let $m$, $x_1,\ldots,x_p$, $y_1,\ldots,y_p$, and $v$ be positive integers. Let $s_1,\ldots,s_{\frac{m-1}
{2}}$ be non-negative integers. Suppose the following conditions are satisfied:
\begin{itemize}
\item There exists a decomposition of $K_m$ into $[n_1,\ldots,n_p]$-factors.
\item For every $1\leq i\leq p$, and for every $1\leq t \leq \frac{m-1}{2}$ there exists a decomposition 
of $C_{(v:n_i)}$ into $s_t$ $C_{x_in_i}$-factors and $r_t$ $C_{y_in_i}$-factors.
\end{itemize}

Let
\[
s=\sum_{t=1}^{\frac{(m-1)}{2}}s_t\text{\hspace{.3in} and \hspace{.3in}}r=\sum_{t=1}^{\frac{(m-1)}{2}}r_t
\]

Then there exists a decomposition of $K_{(v:m)}$ into $s$ $[(x_1n_1)^{\frac{v}{x_1}},\ldots,
(x_pn_p)^{\frac{v}{x_p}}]$-factors and $r$ $[(y_1n_1)^{\frac{v}{y_1}},\ldots,$ $(y_pn_p)^{\frac{v}{y_p}}]$-
factors.
\end{lemma}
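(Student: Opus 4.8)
The plan is a blow‑up construction: lift a $2$‑factorization of $K_m$ to $K_{(v:m)}$, recognize that the lift of each cycle is a copy of the cyclic multipartite graph $C_{(v:n_i)}$, refine each such copy using the decompositions granted by the second hypothesis, and glue the resulting small factors back together into $2$‑factors of $K_{(v:m)}$.

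First I would fix the assumed decomposition $E(K_m)=\bigoplus_{j=1}^{(m-1)/2}G_j$ into $[n_1,\dots,n_p]$‑factors (its existence forces $m$ odd, which is why the index range matches that of the $s_t$), and label the cycles of each $G_j$ as $Z_{j,1},\dots,Z_{j,p}$ with $Z_{j,i}$ of length $n_i$. Blowing up each vertex of $K_m$ to a part of $v$ vertices and each edge to a copy of $K_{v,v}$ carries $G_j$ to a spanning subgraph $\widehat{G_j}$ of $K_{(v:m)}$; since the copies of $K_{v,v}$ partition $E(K_{(v:m)})$ we get $E(K_{(v:m)})=\bigoplus_j\widehat{G_j}$ and $\widehat{G_j}=\bigoplus_{i=1}^p\widehat{Z_{j,i}}$, where $\widehat{Z_{j,i}}$ is the blow‑up of $Z_{j,i}$. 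In the language of Section~2 this is just $\widehat{G_j}=G_j\otimes K_{(v:m)}$, the refinements being instances of the distributive law of Theorem~\ref{distributive}. Because $n_i\ge 3$, the $K_{v,v}$‑blow‑up of an $n_i$‑cycle is exactly $C_{(v:n_i)}$, so $\widehat{Z_{j,i}}\cong C_{(v:n_i)}$, a graph on $vn_i$ vertices spanning the $n_i$ parts coming from $V(Z_{j,i})$.

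Next, for each $j$ I would apply the second hypothesis with $t=j$ to each of $\widehat{Z_{j,1}},\dots,\widehat{Z_{j,p}}$: this decomposes $\widehat{Z_{j,i}}\cong C_{(v:n_i)}$ into $s_j$ $C_{x_in_i}$‑factors $F^{j}_{i,1},\dots,F^{j}_{i,s_j}$ and $r_j$ $C_{y_in_i}$‑factors $E^{j}_{i,1},\dots,E^{j}_{i,r_j}$, each consisting of $v/x_i$ (resp.\ $v/y_i$) cycles since $\widehat{Z_{j,i}}$ has $vn_i$ vertices. The crucial point is that the total number $s_j+r_j$ of these small factors does not depend on $i$ (it equals $v$, half the regularity of $C_{(v:n_i)}$), so I can set $\mathcal F^{j}_a:=\bigoplus_{i=1}^p F^{j}_{i,a}$ for $1\le a\le s_j$ and $\mathcal E^{j}_b:=\bigoplus_{i=1}^p E^{j}_{i,b}$ for $1\le b\le r_j$. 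Because the $\widehat{Z_{j,i}}$ span pairwise disjoint vertex sets whose union is $V(K_{(v:m)})$, each $\mathcal F^{j}_a$ and each $\mathcal E^{j}_b$ is a spanning $2$‑regular subgraph of $K_{(v:m)}$, i.e.\ a $2$‑factor, with cycle list $[(x_1n_1)^{v/x_1},\dots,(x_pn_p)^{v/x_p}]$ and $[(y_1n_1)^{v/y_1},\dots,(y_pn_p)^{v/y_p}]$ respectively. For fixed $j$ these exhaust $\widehat{G_j}$, so as $j$ ranges over $1,\dots,(m-1)/2$ they give a decomposition of $K_{(v:m)}$ into $\sum_j s_j=s$ factors of the first type and $\sum_j r_j=r$ of the second.

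I do not expect a genuinely hard step here. The points that need care are the identification that the $K_{v,v}$‑blow‑up of an $n_i$‑cycle really is $C_{(v:n_i)}$ (where $n_i\ge 3$ is used to rule out repeated edges), and the bookkeeping that the assembled $\mathcal F^{j}_a$ and $\mathcal E^{j}_b$ are vertex‑spanning and $2$‑regular with exactly the claimed cycle structure while every edge of $K_{(v:m)}$ is covered once and only once. The single conceptual observation driving the argument is that, inside one $\widehat{G_j}$, all $p$ cyclic blow‑ups get refined into the same number of small factors, which is precisely what permits the cross‑cycle gluing into full $2$‑factors.
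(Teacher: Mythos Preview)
Your argument is correct and is exactly the standard blow--up construction: inflate the $[n_1,\dots,n_p]$--factorization of $K_m$ by weight $v$, identify each inflated $n_i$--cycle with $C_{(v:n_i)}$, refine using the hypothesised $C_{x_in_i}$/$C_{y_in_i}$--factorizations, and reassemble across the $p$ cycles of each $G_j$ (this works because $s_t+r_t=v$ is forced, independently of $i$). The present paper does not prove this lemma---it is quoted from \cite{KP}---but your proof is precisely the argument given there, and it is also how the lemma is deployed in the proof of Theorem~\ref{lemmausingliu} (``give weight $4^kxy$ to the vertices in $K_{(v_1:m)}$\ldots'').
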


Lemmas~\ref{productofbalanced} and ~\ref{cvntokvm} and Theorem~\ref{distributive} will be employed in this paper as well to produce
the decompositions we are interested in.
In Section \ref{section even case} we give decompositions of $\overrightarrow{C}_{(4^k:n)}$ into $
\overrightarrow{C}_{2^kn}$-factors and $\overrightarrow{C}_{n}$-factors. In Section \ref{section 
multivariate bijections} we use multivariate bijections to give decompositions of 
$\overrightarrow{C}_{(4^kxy:n)}$ into $\overrightarrow{C}_{2^kxk}$-factors and 
$\overrightarrow{C}_{yk}$-factors. Then in Section \ref{section main results}, we
use these decompositions to prove our main results.

%%%%%%%%%%%%%%%%%%%%%%%%%%%%%%%%%%%%%%%%%%%%%%%%%%%%%%%
\section{Equipartite Decompositions}\label{section even case}
We will start decomposing $\overrightarrow{C}_{(4^k:3)}$ into $\overrightarrow{C}_{2^{k}3}$-factors and 
$\overrightarrow{C}_{3}$-factors.
Label the vertices in each partite set of $\overrightarrow{C}_{(4^k:3)}$ with the elements of the quotient 
ring $\mathfrak{R}=\mathbb{Z}_{2^{k}}[x]/\left\langle x^2+x+1\right\rangle$. Because the elements of 
$\mathfrak{R}$ are of the form $ax+b$, with $a,b\in\mathbb{Z}_{2^{k}}$, there are $4^k$ of them.
For each $\alpha \in \mathfrak{R}$ let $f_{\alpha}(y)=xy+\alpha$.

\begin{lemma}\label{lemmapropertiesoffalpha}
The functions $f_{\alpha}$ are bijections, with $f^3_{\alpha}(y)=y$.
\end{lemma}

\begin{proof}
We will first show that $f_{0}$ is a bijection.
Notice that the element $x\in \mathfrak{R}$ is a unit, because 
\begin{align*}
 x^2+x+1&=0,&\text{and}& &x(-x-1)&=-x^2-x=1.\\
\end{align*}
Therefore, $xy_1=xy_2$ if and only if $y_1=y_2$, and so $f_{0}$ is a bijection. Because every element has 
an additive inverse we get that $f_{\alpha}$ is a bijection.
We will see now that $f^3_{\alpha}(y)=y$:

\begin{align*}
f_{\alpha}(y)&=xy+\alpha\\
f^2_{\alpha}(y)&=x(xy+\alpha)+\alpha\\
&=x^2y+x\alpha+\alpha\\
&=-xy-y+x\alpha+\alpha\\
f^3_{\alpha}(y)&=x(-xy-y+x\alpha+\alpha)+\alpha\\
&=-x^2y-xy+x^2\alpha+x\alpha+\alpha\\
&=xy+y-xy+(x^2+x+1)\alpha\\
&=y+0\alpha\\
&=y
\end{align*}

Therefore $f_{\alpha}$ is a bijection and $f^3_{\alpha}(y)=y$.
\end{proof}

Let $T_{4^k}(\alpha)$ be the subgraph of $\overrightarrow{C}_{(4^k:3)}$ where each element $y$ in a 
partite set is connected to the element $f_{\alpha}(y)$ in the next partite set. Because of Lemma 
\ref{lemmapropertiesoffalpha}, $T_{4^k}(\alpha)$ is a $\overrightarrow{C}_3$-factor ($f^3_{\alpha}(y)=y$)
and given two different elements, $\alpha$ and $\beta$, $T_{4^k}(\alpha)$ and $T_{4^k}(\beta)$ are disjoint.
Therefore $\overrightarrow{C}_{(4^k:3)}=\bigoplus_{\alpha\in \mathfrak{R}}T_{4^k}(\alpha)$ is a $
\overrightarrow{C}_3$-factorization of $\overrightarrow{C}_{(4^k:3)}$.

Let $H_{4^k}(\alpha,\beta)$ be the subgraph of $\overrightarrow{C}_{(4^k:3)}$ where each element $y$ of 
the first and second partite sets are connected to the element $f_{\alpha}(y)$ of the second and third 
partite sets respectively, and each element $y$ of the third partite set is connected to the element 
$f_{\beta}(y)$ of the first partite set. The following result is easy to see:

\begin{lemma}\label{fromTtoH}
Let $\phi$ be a permutation of $\mathfrak{R}$, then
\[
\overrightarrow{C}_{(4^k:3)}=\bigoplus_{\alpha\in \mathfrak{R}}T_{4^k}(\alpha)=\bigoplus_{\alpha\in 
\mathfrak{R}}H_{4^k}(\alpha,\phi(\alpha))
\]
and $\bigoplus_{\alpha\in \mathfrak{R}}H_{4^k}(\alpha,\phi(\alpha))$ is a decomposition of $
\overrightarrow{C}_{(4^k:3)}$.
\end{lemma}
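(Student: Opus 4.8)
The plan is to reduce everything to the obvious fact that, layer by layer, the graphs $f_\gamma$ give a partition of a complete bipartite digraph into perfect matchings. Write the three partite sets of $\overrightarrow{C}_{(4^k:3)}$ as $P_0,P_1,P_2$, each indexed by $\mathfrak{R}$, and split the arc set into the three \emph{layers} $L_i=\{\big((y,i),(z,i+1)\big) : y,z\in\mathfrak{R}\}$ for $i=0,1,2$ (indices mod $3$); each $L_i$ is a complete bipartite digraph with every arc from $P_i$ to $P_{i+1}$, and these layers partition $E(\overrightarrow{C}_{(4^k:3)})$. For $\gamma\in\mathfrak{R}$ let $M_\gamma^{(i)}=\{\big((y,i),(f_\gamma(y),i+1)\big):y\in\mathfrak{R}\}\subseteq L_i$. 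By Lemma~\ref{lemmapropertiesoffalpha} each $f_\gamma$ is a bijection, so $M_\gamma^{(i)}$ is a perfect matching of $L_i$; moreover the arc $\big((y,i),(z,i+1)\big)$ lies in $M_\gamma^{(i)}$ exactly when $z=f_\gamma(y)=xy+\gamma$, i.e. when $\gamma=z-xy$. Hence for each layer $i$ the matchings $\{M_\gamma^{(i)}\}_{\gamma\in\mathfrak{R}}$ partition $L_i$.

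Next I would re-read the definitions of $T_{4^k}$ and $H_{4^k}$ in this language: $T_{4^k}(\alpha)=M_\alpha^{(0)}\cup M_\alpha^{(1)}\cup M_\alpha^{(2)}$, while $H_{4^k}(\alpha,\beta)=M_\alpha^{(0)}\cup M_\alpha^{(1)}\cup M_\beta^{(2)}$. Since in every layer each vertex has out-degree and in-degree $1$, $H_{4^k}(\alpha,\beta)$ is a spanning subgraph that is a disjoint union of directed cycles, so it is a legitimate factor (the lemma does not assert it is a $\overrightarrow{C}_3$-factor, only that the $H_{4^k}(\alpha,\phi(\alpha))$ decompose the graph, so I need not say more here). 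The first displayed equality $\overrightarrow{C}_{(4^k:3)}=\bigoplus_{\alpha}T_{4^k}(\alpha)$ is just the statement that $\{M_\gamma^{(i)}\}_\gamma$ partitions $L_i$ in each of the three layers simultaneously, which was already observed in the paragraph preceding the lemma.

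For the new content I would fix a permutation $\phi$ of $\mathfrak{R}$ and compute $\bigoplus_{\alpha\in\mathfrak{R}}H_{4^k}(\alpha,\phi(\alpha))$ layer by layer. Restricted to $L_0$ it equals $\bigcup_{\alpha}M_\alpha^{(0)}$, which is $L_0$ with each arc covered exactly once; likewise for $L_1$. Restricted to $L_2$ it equals $\bigcup_{\alpha}M_{\phi(\alpha)}^{(2)}$, and since $\phi$ is a bijection, $\{\phi(\alpha):\alpha\in\mathfrak{R}\}=\mathfrak{R}$, so this is again $\bigcup_{\gamma}M_\gamma^{(2)}=L_2$ with each arc covered once. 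Summing the three layers shows $\bigoplus_{\alpha}H_{4^k}(\alpha,\phi(\alpha))$ is an arc-disjoint union covering every arc of $\overrightarrow{C}_{(4^k:3)}$, i.e. a decomposition, and in particular it equals $\bigoplus_{\alpha}T_{4^k}(\alpha)$ as claimed.

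There is no real obstacle: the argument is pure bookkeeping once the arc set is split into the three layers and the ``$\gamma=z-xy$'' identification is made. The only point that warrants a sentence of care is that surjectivity of $\phi$ (not just injectivity) is what makes the third layer come out right, and that ``decomposition'' here is meant in the arc-partition sense rather than as a factorization into isomorphic factors.
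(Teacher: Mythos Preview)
Your proof is correct and is essentially the same as the paper's: the paper's proof simply notes that the first equality was already established in the preceding discussion and that the second holds ``because of $\phi$ being a permutation (each edge gets used once).'' Your layer-by-layer bookkeeping with the matchings $M_\gamma^{(i)}$ is just an explicit unpacking of that one-line observation, with the same key point that bijectivity of $\phi$ is what makes the third layer work.
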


\begin{proof}
The first equality is true by the discussion preceding this lemma. The second equality is true because of 
$\phi$ being a permutation (each edge gets used once).
\end{proof}

Consider $y_1\in \mathfrak{R}$. Let $y_2=f_{\alpha}(y_1)$, $y_3=f_{\alpha}(y_2)$, and $y_4=f_{\beta}(y_3)
$. Then because $f_{\alpha}^3(y_1)=y_1$, we have $xy_3+\alpha=f_{\alpha}(y_3)=f_{\alpha}^3(y_1)=y_1$. We 
also have $f_{\beta}(y_3)=xy_3+\beta=y_4$. So $y_1-y_4=\alpha-\beta$. Therefore, if $\alpha-\beta\in\{\pm 
1,\pm x,\pm x\pm 1\}$, then  $H_{4^k}(\alpha,\beta)$ consists of directed cycles of length $3\cdot 2^{k}$ 
because $\alpha-\beta$ has additive order $2^k$ in $\mathfrak{R}$. Hence if $\phi$ has $r$ fixed points 
and for every non-fixed point $\alpha$ we have $\alpha-\phi(\alpha)\in\{\pm 1, \pm x, \pm x \pm 1\}$, we 
obtain a decomposition of $\overrightarrow{C}_{(4^k:3)}$ into $r$ $\overrightarrow{C}_3$-factors and $4^k-
r$ $\overrightarrow{C}_{3\cdot2^k}$-factors. If $\alpha=\beta$, then because $H_{4^k}(\alpha,\alpha)=T_{4^k}(\alpha)$, we have that $H_{4^k}(\alpha,\beta)$ is a $\overrightarrow{C}_3$-factor of $\overrightarrow{C}_{(4^k:3)}$. Thus we have the following lemma.

\begin{lemma}\label{3h4k}
If $\alpha=\beta$, then $H_{4^k}(\alpha,\beta)$ is a $\overrightarrow{C}_{3}$-factor of 
$\overrightarrow{C}_{(4^k:3)}$. 
If $\alpha-\beta\in\{\pm 1,\pm x,\pm x\pm 1\}$, then $H_{4^k}(\alpha,\beta)$ is a 
$\overrightarrow{C}_{3\cdot 2^k}$-factor of $\overrightarrow{C}_{(4^k:3)}$. 
\end{lemma}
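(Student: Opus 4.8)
The plan is to deduce both assertions directly from the analysis of $H_{4^k}(\alpha,\beta)$ carried out in the paragraph preceding the lemma, together with Lemma~\ref{lemmapropertiesoffalpha}. For the first assertion I would simply observe that when $\alpha=\beta$ the three defining maps of $H_{4^k}(\alpha,\beta)$ — part $0\to$ part $1$, part $1\to$ part $2$, part $2\to$ part $0$ — are all equal to $f_\alpha$, so by definition $H_{4^k}(\alpha,\alpha)=T_{4^k}(\alpha)$; and $T_{4^k}(\alpha)$ was already shown to be a $\overrightarrow{C}_3$-factor of $\overrightarrow{C}_{(4^k:3)}$, using $f_\alpha^3=\mathrm{id}$ from Lemma~\ref{lemmapropertiesoffalpha}.

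For the second assertion I would first note that $H_{4^k}(\alpha,\beta)$ is a spanning subgraph of $\overrightarrow{C}_{(4^k:3)}$ in which every vertex has out-degree and in-degree exactly $1$, since each of $f_\alpha,f_\beta$ is a bijection of $\mathfrak{R}$; hence $H_{4^k}(\alpha,\beta)$ is a vertex-disjoint union of directed cycles covering all of $V(\overrightarrow{C}_{(4^k:3)})$, and it only remains to pin down their common length. Tracing a path from a vertex $y_1$ in the first partite set, after three arcs we are back in the first partite set at $y_4=f_\beta(f_\alpha(f_\alpha(y_1)))$, and the computation preceding the lemma gives $y_4=y_1-(\alpha-\beta)$. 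Iterating, after $3j$ arcs we sit at $y_1-j(\alpha-\beta)$ in the first partite set; since a cycle of $H_{4^k}(\alpha,\beta)$ meets the first partite set only after numbers of arcs divisible by $3$, the cycle through $y_1$ has length $3\ell$, where $\ell$ is the additive order of $\alpha-\beta$ in $\mathfrak{R}$.

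The only remaining point — and the one small computation I expect to be the crux — is that every element of $\{\pm 1,\pm x,\pm x\pm 1\}$ has additive order exactly $2^k$ in $\mathfrak{R}$. This is where I would use that the additive group of $\mathfrak{R}=\mathbb{Z}_{2^k}[x]/\langle x^2+x+1\rangle$ is free of rank $2$ over $\mathbb{Z}_{2^k}$ with basis $\{1,x\}$: identifying $ax+b$ with $(a,b)$, its order is the least common multiple of the orders of $a$ and of $b$ in $\mathbb{Z}_{2^k}$, and for each of $(0,\pm1),(\pm1,0),(\pm1,\pm1)$ at least one coordinate is a unit, forcing order $2^k$. Thus $\ell=2^k$, so every cycle of $H_{4^k}(\alpha,\beta)$ has length $3\cdot 2^k$ and $H_{4^k}(\alpha,\beta)$ is a $\overrightarrow{C}_{3\cdot 2^k}$-factor of $\overrightarrow{C}_{(4^k:3)}$. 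I do not anticipate any genuine obstacle: once the facts $f_\alpha^3=\mathrm{id}$ and $y_1-y_4=\alpha-\beta$ are in hand, the argument is bookkeeping plus the order computation above.
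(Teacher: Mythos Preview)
Your proposal is correct and follows essentially the same route as the paper: the lemma there is stated immediately after the paragraph deriving $y_1-y_4=\alpha-\beta$ and observing that $H_{4^k}(\alpha,\alpha)=T_{4^k}(\alpha)$, so the paper's ``proof'' is precisely that preceding discussion. The only difference is that you spell out why each element of $\{\pm 1,\pm x,\pm x\pm 1\}$ has additive order $2^k$ in $\mathfrak{R}$ via the identification with $\mathbb{Z}_{2^k}\times\mathbb{Z}_{2^k}$, whereas the paper simply asserts this order; your version is a welcome expansion rather than a different argument.
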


Let $\pi:\mathfrak{R}\rightarrow \mathbb{Z}_{2^{k-1}}\times \mathbb{Z}_{2^{k-1}}\times \mathbb{Z}_{4}$ 
defined as:
\begin{center}
\[
\pi(a+bx)\coloneqq\left\lbrace\begin{array}{ccc}
(\lfloor a/2\rfloor,\lfloor b/2\rfloor,0)&\text{ if }& \text{ $a$ and $b$ are even}\\
(\lfloor a/2\rfloor,\lfloor b/2\rfloor,1)&\text{ if }& \text{ $a$ is odd and $b$ are even}\\
(\lfloor a/2\rfloor,\lfloor b/2\rfloor,2)&\text{ if }& \text{ $a$ is even and $b$ is odd}\\
(\lfloor a/2\rfloor,\lfloor b/2\rfloor,3)&\text{ if }& \text{ $a$ and $b$ are odd}\\
\end{array}\right.
\]
\end{center}
Notice that $\pi$ is a bijection, and that if $\rho$ is a permutation on $\mathbb{Z}_{2^{k-1}}\times 
\mathbb{Z}_{2^{k-1}}\times \mathbb{Z}_{4}$ that fixes the first two coordinates and has $r$ fixed points, 
then $\phi=\pi^{-1}(\rho(\pi))$ is a permutation of $\mathfrak{R}$ that has $r$ fixed points and for every 
non-fixed point $\alpha$ we have $\alpha-\phi(\alpha)\in\{\pm 1, \pm x, \pm x \pm 1\}$.

Because we are asking for $\rho$ to fix the first two coordinates, finding the necessary function is
similar to finding $4^k$ permutations $\rho_{\dot{a},\dot{b}}$ of $\mathbb{Z}_4$ with the necessary number of
fixed points. Then we have $\rho(\dot{a},\dot{b},i)=(\dot{a},\dot{b},\rho_{\dot{a},\dot{b}}(i))$.

\begin{lemma}\label{decomCv3}
Let $r\neq 4^k-1$, then there is a decomposition of $\overrightarrow{C}_{(4^k:3)}$ into $r$ 
$\overrightarrow{C}_3$-factors and $s=4^k-r$ $C_{3*2^k}$-factors.
\end{lemma}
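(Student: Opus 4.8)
The plan is to reduce the statement to a counting problem about permutations of $\mathbb{Z}_4$, using the machinery built up in the preceding paragraphs. By the discussion before the lemma, a decomposition of $\overrightarrow{C}_{(4^k:3)}$ into $r$ $\overrightarrow{C}_3$-factors and $s = 4^k - r$ $\overrightarrow{C}_{3\cdot 2^k}$-factors exists whenever there is a permutation $\phi$ of $\mathfrak{R}$ with exactly $r$ fixed points such that every non-fixed point $\alpha$ satisfies $\alpha - \phi(\alpha) \in \{\pm 1, \pm x, \pm x \pm 1\}$ (here I use Lemma~\ref{fromTtoH} and Lemma~\ref{3h4k}). Via the bijection $\pi$, such a $\phi$ is produced by a permutation $\rho$ of $\mathbb{Z}_{2^{k-1}} \times \mathbb{Z}_{2^{k-1}} \times \mathbb{Z}_4$ that fixes the first two coordinates and has $r$ fixed points, and such a $\rho$ amounts to choosing, for each of the $4^{k-1}$ pairs $(\dot a, \dot b)$, a permutation $\rho_{\dot a, \dot b}$ of $\mathbb{Z}_4$; the total number of fixed points of $\rho$ is the sum over all pairs of the number of fixed points of $\rho_{\dot a, \dot b}$.

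So the core task is: given $r$ with $0 \le r \le 4^k$ and $r \ne 4^k - 1$, write $r = \sum_{(\dot a, \dot b)} c_{\dot a, \dot b}$ where each $c_{\dot a, \dot b} \in \{0,1,2,3,4\}$ and, crucially, each value actually occurs as the fixed-point count of some permutation of $\mathbb{Z}_4$. The point of the hypothesis $r \ne 4^k - 1$ is precisely that a permutation of a $4$-element set cannot have exactly $3$ fixed points: the attainable fixed-point counts are $\{0,1,2,4\}$. First I would record that $0,1,2,4$ are each realized by explicit permutations of $\mathbb{Z}_4$ (identity for $4$; a transposition for $2$; a $3$-cycle for $1$; a $4$-cycle or a product of two transpositions for $0$). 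Then I would argue that every integer $r \in \{0, 1, \ldots, 4^k\} \setminus \{4^k - 1\}$ can be written as an ordered sum of $4^{k-1}$ terms each lying in $\{0,1,2,4\}$. This is the elementary arithmetic heart of the proof.

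For that decomposition: if $r \le 4^k - 2$, write $r = 4q + t$ with $0 \le t \le 3$ via division; then since $r \le 4^{k}-2 < 4 \cdot 4^{k-1}$ we have $q \le 4^{k-1}-1$, so I can use $q$ coordinates with value $4$, one coordinate with value $t$ (using that $t \in \{0,1,2,3\}$ — and $3$ is not allowed, so for $t = 3$ instead use $q+1$ coordinates contributing $4$ won't work; rather use one coordinate with value $2$ and one with value $1$, which needs $q + 2 \le 4^{k-1}$, valid since $r \le 4^k - 2$ forces $q \le 4^{k-1} - 2$ when $t=3$), and fill the remaining coordinates with $0$. The case $r = 4^k$ is the all-$4$'s assignment (the identity). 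The single excluded value $r = 4^k - 1$ would require $q = 4^{k-1}-1$ and $t = 3$, needing $4^{k-1}-1+2 > 4^{k-1}$ coordinates, which is exactly why it fails — so no case analysis gap remains. I expect this bookkeeping (making sure the number of used coordinates never exceeds $4^{k-1}$, with the $t = 3$ subcase handled separately) to be the only place requiring care; everything else is assembling the already-proven lemmas. Finally I would note $s = 4^k - r$ is whatever is left over, completing the statement.

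\begin{remark}
One should double-check the boundary subcase $k = 1$: there $4^{k-1} = 1$, so there is a single $\mathbb{Z}_4$-block, $r \in \{0,1,2,4\}$, and indeed $r = 3 = 4^k - 1$ is the sole exclusion, matching the hypothesis.
\end{remark}
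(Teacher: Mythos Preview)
Your proposal is correct and follows exactly the same approach as the paper: reduce to choosing, for each of the $4^{k-1}$ pairs $(\dot a,\dot b)$, a permutation of $\mathbb{Z}_4$ whose fixed-point count lies in $\{0,1,2,4\}$, and then assemble $\phi=\pi^{-1}\rho\pi$. In fact you supply more detail than the paper does --- the paper simply asserts that one can choose $r_{\dot a,\dot b}\in\{0,1,2,4\}$ summing to $r$, whereas you actually verify this arithmetic (including the $t=3$ subcase and the $k=1$ boundary), so your write-up is strictly more complete.
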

\begin{proof}
Let $r_{\dot{a},\dot{b}}$, $0\leq \dot{a},\dot{b}\leq 2^{k-1}-1$ 
be such that $r_{\dot{a},\dot{b}}\in\{0,1,2,4\}$, 
$\displaystyle\sum_{\dot{a},\dot{b}}r_{\dot{a},\dot{b}}=r$. 
Let $\rho_{\dot{a},\dot{b}}$ be a permutation on $\mathbb{Z}_4$ with $r_{\dot{a},\dot{b}}$ fixed points.
Let $\rho(\dot{a},\dot{b},i)=(\dot{a},\dot{b},\rho_{\dot{a},\dot{b}}(i))$, and $\phi=\pi^{-1}(\rho(\pi))$. 
Then the decomposition is given by
\[
\overrightarrow{C}_{(4^k:3)}=\bigoplus_{\alpha \in \mathfrak{R}}H_{4^k}(\alpha,\phi(\alpha))
\]
\end{proof}

Given an $n$-partite graph $G$, with parts $G_0,G_1,\ldots,G_{n-1}$, let $F_h(G)$ be the subgraph of $G$ 
that contains only the edges between parts $h-1$ and $h$. In particular
$F_n(G)$ contains the edges between $G_{n-1}$ and $G_0$.

\begin{theorem}\label{4ktheorem}
Let $r\neq 4^k-1$ and $n\geq 5$. Then there is a decomposition of $\overrightarrow{C}_{(4^k:n)}$ into $r$ 
$\overrightarrow{C}_n$-factors and $s=4^k-r$ $\overrightarrow{C}_{n\cdot 2^k}$-factors.
\end{theorem}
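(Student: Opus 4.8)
The plan is to realize $\overrightarrow{C}_{(4^k:n)}$ as the edge‑disjoint union of its $n$ layers $F_1(\overrightarrow{C}_{(4^k:n)}),\dots,F_n(\overrightarrow{C}_{(4^k:n)})$, each an oriented complete bipartite graph on the $4^k$ labels of $\mathfrak{R}$, and to assemble each factor from one perfect matching in each layer. Concretely, given bijections $g_1,\dots,g_n$ of $\mathfrak{R}$, let $A(g_1,\dots,g_n)$ be the spanning subgraph in which vertex $y$ of part $h-1$ is joined to $g_h(y)$ of part $h$ (indices mod $n$, so $g_n$ acts between parts $n-1$ and $0$). Following an arc $n$ times returns a vertex of part $0$ to part $0$ via the \emph{return map} $R=g_n\circ\cdots\circ g_1$, so $A(g_1,\dots,g_n)$ is a disjoint union of directed cycles, the cycle through $y\in$ part $0$ having length $n$ times the length of the orbit of $y$ under $R$. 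Hence $A(g_1,\dots,g_n)$ is a $\overrightarrow{C}_n$-factor when $R=\mathrm{id}$, and a $\overrightarrow{C}_{n\cdot 2^k}$-factor when $R$ is a translation $\tau_c\colon y\mapsto y+c$ with $c$ of additive order $2^k$ in $\mathfrak{R}$ (then every orbit has length $2^k$). This reduces the theorem to choosing, for each $\alpha\in\mathfrak{R}$, a tuple $A_\alpha$ so that for every fixed layer the matchings used over all $\alpha$ partition that layer, while $R$ is the identity for exactly $r$ values of $\alpha$ and an order‑$2^k$ translation for the other $s=4^k-r$.

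For the last three layers I would copy the $n=3$ construction: put $g_{n-2}=g_{n-1}=f_\alpha$ and $g_n=f_{\phi(\alpha)}$ with $f_\gamma(y)=xy+\gamma$ and $\phi$ a permutation of $\mathfrak{R}$. By the computation preceding Lemma~\ref{3h4k} one has $f_{\phi(\alpha)}\circ f_\alpha\circ f_\alpha=\tau_{\phi(\alpha)-\alpha}$, and since $\bigoplus_{\alpha}T_{4^k}(\alpha)$ is a decomposition of $\overrightarrow{C}_{(4^k:3)}$ the matchings $\{f_\gamma:\gamma\in\mathfrak{R}\}$ partition the arcs of any single layer. For the remaining $n-3$ layers I would use translation matchings $g_h=\tau_{d_h(\alpha)}$ chosen so that $\sum_{h=1}^{n-3}d_h(\alpha)=0$ for every $\alpha$ while each $d_h$ is a bijection of $\mathfrak{R}$: since $n\ge 5$ we have $n-3\ge 2$, so partition the indices $1,\dots,n-3$ into pairs on which $(d_h)=(\mathrm{id},-\mathrm{id})$ (each pair contributes $\alpha+(-\alpha)=0$), using one triple on which $(d_h)$ equals multiplication by $1$, by $x$, by $x^2$ when $n-3$ is odd — here $1+x+x^2=0$ in $\mathfrak{R}$ and $1,x,x^2$ are units, so these are bijections summing to $0$. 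With these choices the translation layers compose to the identity, the return map of $A_\alpha$ is exactly $\tau_{\phi(\alpha)-\alpha}$, and each of the first $n-3$ layers is partitioned because every $d_h$ is a bijection.

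It remains to pick $\phi$, and here I would invoke Lemma~\ref{decomCv3} (equivalently the paragraph following the definition of $\pi$): since $r\neq 4^k-1$ there is a permutation $\phi$ of $\mathfrak{R}$ with exactly $r$ fixed points such that $\phi(\alpha)-\alpha\in\{\pm1,\pm x,\pm x\pm 1\}$, hence of additive order $2^k$, for every non‑fixed $\alpha$. Then $A_\alpha$ is a $\overrightarrow{C}_n$-factor for the $r$ fixed points of $\phi$ and a $\overrightarrow{C}_{n\cdot 2^k}$-factor for the other $s=4^k-r$ values, layer $F_n$ is partitioned because $\phi$ is a bijection, and $\bigoplus_{\alpha\in\mathfrak{R}}A_\alpha=\overrightarrow{C}_{(4^k:n)}$. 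The hypothesis $r\neq 4^k-1$ is used only in this appeal to Lemma~\ref{decomCv3}, and the residual work is bookkeeping: checking that the layer‑by‑layer matchings really partition each $F_h$ and that ``$R=\tau_c$ with $\mathrm{ord}(c)=2^k$'' forces every component to be a $\overrightarrow{C}_{n\cdot 2^k}$. I do not expect a genuine obstacle; the only point needing a little care is making the extra $n-3$ translation layers cancel for both parities of $n$, which the pair/triple decomposition above arranges for every $n\ge 5$ (and which is exactly what fails at $n=4$, where a single translation layer cannot cancel on its own).
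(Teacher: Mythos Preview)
Your proof is correct and follows essentially the same approach as the paper: build each factor from one perfect matching per layer, use $f_\alpha$ on the last few layers so that the return map equals $\tau_{\phi(\alpha)-\alpha}$, fill the remaining layers with translation matchings that cancel, and invoke Lemma~\ref{decomCv3} to pick $\phi$. The one difference is cosmetic: for even $n$ the paper uses six $f_\alpha$ layers (so $f_\alpha^6=\mathrm{id}$ and the remaining $n-6$ translation layers pair off), whereas you keep three $f_\alpha$ layers for all $n$ and absorb the parity of $n-3$ with a single triple of translations by $\alpha,\,x\alpha,\,x^2\alpha$ using $1+x+x^2=0$; both devices achieve the same cancellation.
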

\begin{proof}
First suppose $n$ is odd, and let $G_0,G_1,\ldots,G_{n-1}$ be the partite sets. Let $T_{4^k}(\alpha)$ be 
the subgraph of $\overrightarrow{C}_{(4^k:n)}$ having the following edges:
\begin{itemize}
\item A vertex $y$ in $G_i$ is adjacent to $y+\alpha$ in $G_{i+1}$ if $i<n-3$ is odd;
\item A vertex $y$ in $G_i$ is adjacent to $y-\alpha$ in $G_{i+1}$ if $i<n-3$ is even;
\item A vertex $y$ in $G_i$ is adjacent to $f_{\alpha}(y)$ in $G_{i+1}$ if $i\geq n-3$.
\end{itemize}
Notice that a directed cycle which starts at vertex $y$ in $G_0$ contains the vertex $y$ in $G_{n-3}$.
Because $f_{\alpha}^{3}(y)=y$ by Lemma \ref{lemmapropertiesoffalpha}, we have that $T_{4^k}(\alpha)$ is a 
$\overrightarrow{C}_n$-factor of $\overrightarrow{C}_{(4^k:n)}$. Furthermore, because of the way we
described the edges in $T_{4^k}(\alpha)$, we can identify the partite set $G_{n-3}$ with $G_0$. 
Then any $3$-cycle in $\overrightarrow{C}_{(4^k:3)}$ is equivalent to an $n$-cycle in 
$\overrightarrow{C}_{(4^k:n)}$. Now let 
$H_{4^k}(\alpha,\beta)=T_{4^k}(\alpha)\oplus F_n(T_{4^k}(\alpha))\oplus F_n(T_{4^k}(\beta))$, 
where the arcs $F_n(T_{4^k}(\gamma))$ consist of the arcs in $T_{4^k}(\gamma)$ from $G_{n-1}$ to $G_0$. 
Again, we may identify $G_{n-3}$ with $G_0$, so a directed cycle of length $3\cdot 2^k$ in 
$\overrightarrow{C}_{(4^k:3)}$ is now equivalent to a directed cycle of length $n\cdot 2^k$ in 
$\overrightarrow{C}_{(4^k:n)}$. So by Lemma \ref{decomCv3}, since there is a decomposition of 
$\overrightarrow{C}_{(4^k:3)}$ into $r$ $\overrightarrow{C}_3$-factors and $s=4^k-r$ 
$\overrightarrow{C}_{3\cdot 2^k}$-factors, this is equivalent to a decomposition of 
$\overrightarrow{C}_{(4^k:n)}$ into $r$ $\overrightarrow{C}_n$-factors and $s=4^k-r$ 
$\overrightarrow{C}_{n\cdot 2^k}$-factors.

If $n\geq 6$ is even, then let $T_{4^k}(\alpha)$ be the subgraph of $\overrightarrow{C}_{(4^k:n)}$ having 
the following edges:
\begin{itemize}
\item A vertex $y$ in $G_i$ is adjacent to $y+\alpha$ in $G_{i+1}$ if $i<n-6$ is odd;
\item A vertex $y$ in $G_i$ is adjacent to $y-\alpha$ in $G_{i+1}$ if $i<n-6$ is even;
\item A vertex $y$ in $G_i$ is adjacent to $f_{\alpha}(y)$ in $G_{i+1}$ if $i\geq n-6$.
\end{itemize}
Now a cycle that starts at vertex $y$ in $G_0$ contains the vertex $y$ in $G_{n-6}$, and because 
$f_{\alpha}^3(y)=y$, the cycle also contains vertex $y$ in $G_{n-3}$. We may now apply the same arguments 
as in the case with $n$ odd to obtain the result.
\end{proof}

If we define $T_{4^k}(\alpha)$ and $H_{4^k}(\alpha,\beta)$ as in the proof of Theorem \ref{4ktheorem}, then we obtain the following corollary by applying Lemma \ref{3h4k}.
\begin{corollary}\label{nh4k}
If $\alpha=\beta$, then $H_{4^k}(\alpha,\beta)$ is a $\overrightarrow{C}_{n}$-factor of 
$\overrightarrow{C}_{(4^k:n)}$. 
If $\alpha-\beta\in\{\pm 1,\pm x,\pm x\pm 1\}$, then $H_{4^k}(\alpha,\beta)$ is a 
$\overrightarrow{C}_{2^kn}$-factor of $\overrightarrow{C}_{(4^k:n)}$. 
\end{corollary}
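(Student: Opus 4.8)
The plan is to read the corollary off Lemma \ref{3h4k} directly, by reusing the ``collapsing of partite sets'' already set up in the proof of Theorem \ref{4ktheorem}. Recall from that proof that, for $n\geq 5$, the graph $H_{4^k}(\alpha,\beta)$ is obtained from the $\overrightarrow{C}_n$-factor $T_{4^k}(\alpha)$ by replacing the arcs from $G_{n-1}$ to $G_0$ (which are the arcs $y\mapsto f_\alpha(y)$) by the arcs $y\mapsto f_\beta(y)$. Since $f_\beta$ is a bijection, every vertex of $\overrightarrow{C}_{(4^k:n)}$ still has in-degree and out-degree exactly $1$ in $H_{4^k}(\alpha,\beta)$, so $H_{4^k}(\alpha,\beta)$ is a vertex-disjoint union of directed cycles spanning $V(\overrightarrow{C}_{(4^k:n)})$; it remains only to identify the common length of these cycles.

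To do that I would trace a directed cycle from a vertex $y$ in $G_0$. As noted in the proof of Theorem \ref{4ktheorem}, the ``path part'' of the construction carries the label $y$ unchanged from $G_0$ to $G_{n-3}$: when $n$ is odd this is because the $n-3$ alternating $\pm\alpha$ arcs (an even number of them) cancel in pairs, and when $n$ is even it is because the $n-6$ alternating $\pm\alpha$ arcs cancel up to $G_{n-6}$ and then $f_\alpha^3=\mathrm{id}$ carries $y$ from $G_{n-6}$ to $G_{n-3}$. Hence the last three arcs of each lap act exactly as in $\overrightarrow{C}_{(4^k:3)}$, and one full lap (that is, $n$ consecutive arcs, from $G_0$ back to $G_0$) sends $y$ to $f_\beta\bigl(f_\alpha\bigl(f_\alpha(y)\bigr)\bigr)$, which by the displacement computation preceding Lemma \ref{3h4k} differs from $y$ by $\alpha-\beta$. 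Therefore the number of laps needed to close a cycle is the additive order of $\alpha-\beta$ in $\mathfrak{R}$, so the cycles all have length $n$ when $\alpha=\beta$ and length $n\cdot 2^k$ when $\alpha-\beta\in\{\pm1,\pm x,\pm x\pm1\}$, since each such element has additive order $2^k$. Equivalently, this is Lemma \ref{3h4k} transported along the identification of $G_{n-3}$ with $G_0$, which turns each $3$-cycle of $\overrightarrow{C}_{(4^k:3)}$ into an $n$-cycle of $\overrightarrow{C}_{(4^k:n)}$ (and each $3\cdot 2^k$-cycle into an $n\cdot 2^k$-cycle).

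The only step that needs genuine care, and the one I would write out, is this bookkeeping on the path part: checking that $n-3$ (resp.\ $n-6$) is even so that the $\pm\alpha$ arcs actually cancel, and that every directed cycle traverses the path part exactly once per lap, so that cycle lengths scale by the factor $n/3$ relative to the $3$-partite picture. Granting that, the two assertions follow immediately from Lemma \ref{3h4k} together with the bijectivity of $f_\alpha$ and $f_\beta$.
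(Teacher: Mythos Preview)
Your proposal is correct and follows exactly the route the paper takes: the paper's entire proof is the one-line remark that the corollary is obtained ``by applying Lemma \ref{3h4k}'' after defining $T_{4^k}(\alpha)$ and $H_{4^k}(\alpha,\beta)$ as in the proof of Theorem \ref{4ktheorem}. Your write-up simply unpacks that remark, verifying that the path part from $G_0$ to $G_{n-3}$ carries $y$ to $y$ so that the identification of $G_{n-3}$ with $G_0$ reduces the cycle-length computation to the $3$-partite case; this is precisely the content the paper leaves implicit.
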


%%%%%%%%%%%%%%%%%%%%%%%%%%%%%%%%%%%%%%%%%%%
\section{Multivariate Bijections}\label{section multivariate bijections}

When $x$ is odd, the graphs $T_x(i)$ and $H_x(i,i')$ were defined in \cite{KP} as follows.

$T_x(i)$ is the subgraph of $\overrightarrow{C}_{(x:n)}$ obtained by taking differences:
\begin{itemize}
\item $2^{e_j}i$ between $G_{j-1}$ and $G_{j}$ for $1\leq j \leq k$;
\item $-2i$ between $G_j$ and $G_{j+1}$ for $k\leq j \leq 2k-2$;
\item $-i$ between $G_j$ and $G_{j+1}$ for $2k-1\leq j \leq n-2$;
\item $-i$ between $G_{n-1}$ and $G_{0}$.
\end{itemize}
Then $H_x(i,i')=T_x(i)\oplus F_n(T_x(i))\oplus F_n(T_x(i'))$, and notice that $H_x(i,i)=T_x(i)$.

\begin{lemma}[\cite{KP}]\label{bbb}
$T_x(i)$ is a $\overrightarrow{C}_n$-factor for any $i$.
\end{lemma}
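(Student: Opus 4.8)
The statement to prove is Lemma~\ref{bbb}: that $T_x(i)$ is a $\overrightarrow{C}_n$-factor of $\overrightarrow{C}_{(x:n)}$ for any $i$ (with $x$ odd). The plan is to trace a directed walk from a fixed vertex $y$ in the first partite set $G_0$ through all $n$ partite sets, following the prescribed differences, and show that it returns exactly to $y$ after $n$ steps, so that the component through $y$ is a directed $n$-cycle; since $T_x(i)$ clearly assigns out-degree and in-degree one at every vertex (each difference rule is a bijection on the fiber, being a translation on $\mathbb{Z}_x$ or, in the first $k$ steps, multiplication by $2$ which is a unit since $x$ is odd — here the differences are written additively via translation by $2^{e_j}i$, so again translations), every component is a directed cycle, and it suffices to compute the common length.

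The key computation is to sum the signed differences around the closed walk $G_0 \to G_1 \to \cdots \to G_{n-1} \to G_0$. Starting at $y$, after the first block of $k$ edges the vertex is $y + \left(\sum_{j=1}^{k} 2^{e_j}\right) i$; after the second block ($k \le j \le 2k-2$, which is $k-1$ edges each contributing $-2i$) it is $y + \left(\sum_{j=1}^{k} 2^{e_j} - 2(k-1)\right) i$; after the third block down to $G_{n-1}$ (that is $j$ from $2k-1$ to $n-2$, contributing $-i$ each, a total of $n-1-(2k-1) = n-2k$ of them) and the final w鈤-around edge $G_{n-1}\to G_0$ (another $-i$), the total coefficient of $i$ is
\[
\Delta \;=\; \sum_{j=1}^{k} 2^{e_j} \;-\; 2(k-1) \;-\; (n-2k) \;-\; 1.
\]
I would expect the exponents $e_j$ to be chosen (in \cite{KP}) precisely so that $\sum_{j=1}^k 2^{e_j}$ telescopes to make $\Delta = 0$ in $\mathbb{Z}_x$ — most naturally $e_j$ is arranged so that $\sum 2^{e_j} = 2k-1$ when one unwinds the "doubling then halving" structure, giving $\Delta = (2k-1) - 2(k-1) - (n-2k) - 1 = 2k - n$; then one needs $n \equiv 2k \pmod{?}$, or more likely the construction in \cite{KP} is set up so that the walk genuinely closes after visiting each part once. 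The honest approach is to cite the structure imported from \cite{KP}: the differences are exactly those used there to build a $\overrightarrow{C}_n$-factor, so the closure is guaranteed by their construction, and I would reproduce their short telescoping argument.

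The main obstacle is bookkeeping: getting the index ranges and the count of edges in each block exactly right (off-by-one errors in "$1\le j\le k$", "$k\le j\le 2k-2$", "$2k-1\le j\le n-2$", plus the wrap edge), and confirming that the first-block differences $2^{e_j}i$ really do compose to a single translation rather than a dilation — if they are literally translations by $2^{e_j}i$ (as the phrasing "taking differences" suggests), then every edge rule is a translation, every component of $T_x(i)$ is a directed cycle automatically, and the whole content is the arithmetic identity $\Delta \equiv 0 \pmod x$. So the proof reduces to: (1) observe each layer map is a bijection on fibers, hence $T_x(i)$ is $2$-regular as a directed graph (in-degree $=$ out-degree $=1$); (2) compute $\Delta$ by summing the signed differences around the full cycle and verify $\Delta \equiv 0 \pmod{x}$ using the chosen $e_j$; (3) conclude each component has length exactly $n$, so $T_x(i)$ is a $\overrightarrow{C}_n$-factor. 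I would present steps (1) and (3) in one or two sentences and devote the bulk of the write-up to the displayed sum in step (2).
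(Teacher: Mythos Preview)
The paper does not prove this lemma at all; it is quoted verbatim from \cite{KP} and simply cited. So there is no ``paper's own proof'' to compare against here, and your instinct that ``the honest approach is to cite the structure imported from \cite{KP}'' is in fact exactly what the authors do.

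That said, your outline has the right shape but a genuine arithmetic gap. You correctly reduce the question to showing that the total signed difference
\[
\Delta \;=\; \sum_{j=1}^{k} 2^{e_j} \;-\; 2(k-1) \;-\; (n-2k) \;-\; 1
\]
vanishes (in $\mathbb{Z}$, not just modulo $x$, since the construction must work for every odd $x$). Your guess $\sum_{j} 2^{e_j} = 2k-1$ gives $\Delta = 2k-n$, which is not zero in general, and you notice this but leave it unresolved. Solving $\Delta = 0$ instead forces
\[
\sum_{j=1}^{k} 2^{e_j} \;=\; n-1,
\]
and this is precisely how the $e_j$ and $k$ are chosen in \cite{KP}: one takes the binary expansion $n-1 = \sum_{j=1}^{k} 2^{e_j}$, so $k$ is the number of $1$-bits in $n-1$ and the $e_j$ are the corresponding exponents. (This also explains the otherwise mysterious overloading of the symbol $k$ in this section --- it is \emph{not} the same $k$ as in $4^k$ elsewhere in the paper.) With that identification your steps (1)--(3) go through exactly, and indeed that is the argument in \cite{KP}: each layer map is a translation on $\mathbb{Z}_x$, hence a bijection; the signed differences telescope to zero; so every component is a directed $n$-cycle.
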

\begin{lemma}[\cite{KP}]\label{HisforHamilton}
If $gcd(x,i-s)=1$ then $H_x(i,s)$ is a directed Hamiltonian cycle.
\end{lemma}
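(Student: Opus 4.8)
The plan is to treat Lemma~\ref{bbb} as a black box and to exploit the fact that passing from $T_x(i)$ to $H_x(i,s)$ merely swaps one ``layer'' of arcs. Recall that every arc of $\overrightarrow{C}_{(x:n)}$ runs from a part $G_j$ to the next part $G_{j+1}$ (indices mod $n$) and, in the constructions at hand, has the form $y\mapsto y+d$ for a difference $d\in\mathbb{Z}_x$ depending only on the layer. Since $T_x(i)$ is a $\overrightarrow{C}_n$-factor by Lemma~\ref{bbb}, it is $1$-in- and $1$-out-regular and its components are directed $n$-cycles, each meeting the parts once in the cyclic order $G_0,G_1,\dots,G_{n-1},G_0$. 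Now $H_x(i,s)=T_x(i)\oplus F_n(T_x(i))\oplus F_n(T_x(s))$ is obtained by deleting the arcs of $T_x(i)$ between $G_{n-1}$ and $G_0$ --- all of which realize the difference $-i$ --- and inserting in their place the arcs between $G_{n-1}$ and $G_0$ realizing the difference $-s$. Those inserted arcs again form a perfect matching from $G_{n-1}$ to $G_0$, so $H_x(i,s)$ is still $1$-in-, $1$-out-regular, hence a disjoint union of directed cycles, each running through the parts in the order $G_0,\dots,G_{n-1}$.

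Next I would compute the net displacement of one full lap of such a cycle. Fix $y\in G_0$ and follow $T_x(i)$: after the arcs $G_0\to G_1\to\dots\to G_{n-1}$ we reach some $z\in G_{n-1}$, and since $T_x(i)$ is a $\overrightarrow{C}_n$-factor the last arc $z\mapsto z-i$ must return us to $y$, whence $z=y+i$. In $H_x(i,s)$ the arcs $G_0\to\dots\to G_{n-1}$ are unchanged, so from $y$ we again reach $z=y+i\in G_{n-1}$; but the final arc is now $z\mapsto z-s$, landing us at $y+(i-s)\in G_0$. Thus one lap of $H_x(i,s)$ acts on $G_0$ as translation by $i-s$, and after $t$ laps we return to $G_0$ at the vertex $y+t(i-s)$. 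Consequently the component of $H_x(i,s)$ through $y$ closes exactly when $t(i-s)\equiv 0\pmod x$, i.e.\ after $t=x/\gcd(x,i-s)$ laps, yielding a directed cycle of length $n\cdot x/\gcd(x,i-s)$.

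When $\gcd(x,i-s)=1$ this length equals $nx=|V(\overrightarrow{C}_{(x:n)})|$, so it only remains to check that the $nx$ vertices traversed are pairwise distinct. Within one lap the visited vertices lie in the distinct parts $G_0,\dots,G_{n-1}$; and since every arc of $H_x(i,s)$ is affine of slope $1$, starting a lap at $y+c$ rather than $y$ shifts the vertex visited in each part $G_j$ by the same $c$. Hence over the $x$ laps the vertices seen in a fixed part $G_j$ are $y_j,\,y_j+(i-s),\,\dots,\,y_j+(x-1)(i-s)$, which are distinct because $i-s$ is a unit modulo $x$. Therefore $H_x(i,s)$ is a single directed cycle on all $nx$ vertices, i.e.\ a directed Hamiltonian cycle. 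The only points needing care are that the layer-swap preserves $1$-regularity and that the long cycle does not close prematurely inside an intermediate part; both are immediate from the translation invariance of the layered construction, and the remaining content is the short gcd computation of the previous paragraph, requiring no idea beyond Lemma~\ref{bbb}.
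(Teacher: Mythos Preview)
Your argument is correct. The paper itself does not supply a proof of this lemma; it is quoted verbatim from \cite{KP} and used as a black box, so there is no in-paper argument to compare against. The route you take---using Lemma~\ref{bbb} to deduce that the path along $T_x(i)$ from $G_0$ to $G_{n-1}$ accumulates net difference $+i$, then observing that the swapped final arc contributes $-s$ so that one full lap of $H_x(i,s)$ acts on $G_0$ by translation by $i-s$, and finally invoking the affine (translation) structure of each layer to show the $x$ laps hit distinct vertices in every part---is exactly the standard translation/orbit argument one would expect for a layered construction of this kind, and it is essentially the argument given in \cite{KP}. The only cosmetic remark is that the $\oplus$ in the definition of $H_x(i,s)$ is to be read as you read it (remove the old $F_n$-layer, insert the new one), which is forced anyway by the requirement $H_x(i,i)=T_x(i)$ and by $1$-regularity.
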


Given $x$, $y$ and $k$, positive integers with $x,y$ odd,  we will use ideas similar to Section $7$ of $\cite{KP}$ to obtain decompositions of
 $\overrightarrow{C}_{(4^kxy:n)}$ into $\overrightarrow{C}_{2^{k}xn}$-factors and $
\overrightarrow{C}_{yn}$-factors.

\begin{definition}
Let $x$ and $y$ be odd. Define $T_{(4^kxy)}(\alpha,i,j)$ to be the directed subgraph of $
\overrightarrow{C}_{(4^kxy:n)}$ obtained by taking $T_{(4^kxy)}(\alpha,i,j)=T_{(4^k)}(\alpha)\otimes 
T_x(i)\otimes T_y(j)$. We also define 
\[
H_{(4^kxy)}(\alpha,i,j)(\beta,i',j')=T_{(4^kxy)}(\alpha,i,j)\oplus F_n(T_{(4^kxy)}(\alpha,i,j))\oplus 
F_n(T_{(4^kxy)}(\beta,i',j').\]
\end{definition}
This means that $H_{(4^kxy)}(\alpha,i,j)(\beta,i',j')$ is the directed graph obtained by taking the arcs 
of $T_{(4^kxy)}(\alpha,i,j)$ between parts $t$ and $t+1$ for $0\leq t\leq n-2$, and the arcs between parts 
$n-1$ and $0$ from $T_{(4^kxy)}(\beta,i',j')$.

\begin{lemma}
Let $x$, $y$ and $n$ be odd. Then
\[
H_{(4^kxy)}(\alpha,i,j)(\beta,i',j')=H_{(4^k)}(\alpha,\beta)\otimes H_x(i,i')\otimes   H_y(j,j').
\]
\end{lemma}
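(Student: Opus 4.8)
The plan is to prove the identity by unravelling the definitions on both sides and applying the distributive law for the partite product (Theorem~\ref{distributive}). Recall that $H_{(4^kxy)}(\alpha,i,j)(\beta,i',j')$ is, by definition, $T_{(4^kxy)}(\alpha,i,j)\oplus F_n(T_{(4^kxy)}(\alpha,i,j))\oplus F_n(T_{(4^kxy)}(\beta,i',j'))$, and $T_{(4^kxy)}(\alpha,i,j)=T_{(4^k)}(\alpha)\otimes T_x(i)\otimes T_y(j)$. So the left-hand side is built out of the arcs of $T_{(4^k)}(\alpha)\otimes T_x(i)\otimes T_y(j)$ between consecutive parts $t,t+1$ for $0\le t\le n-2$, together with the arcs between parts $n-1$ and $0$ taken from $T_{(4^k)}(\beta)\otimes T_x(i')\otimes T_y(j')$. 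On the right-hand side, $H_{(4^k)}(\alpha,\beta)=T_{(4^k)}(\alpha)\oplus F_n(T_{(4^k)}(\alpha))\oplus F_n(T_{(4^k)}(\beta))$ and similarly for $H_x(i,i')$ and $H_y(j,j')$, so the right-hand side is a triple partite product of such assembled graphs.

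The key observation is that the partite product respects the decomposition of a multipartite graph into its ``layers'' $F_t$. First I would record the elementary fact that for $k$-partite (here $n$-partite) graphs $A$ and $B$, an arc of $A\otimes B$ lying between parts $t$ and $t+1$ comes precisely from an arc of $A$ between parts $t$ and $t+1$ together with an arc of $B$ between the same two parts; in the notation of the paper, $F_t(A\otimes B)=F_t(A)\otimes F_t(B)$, and more generally $A\otimes B=\bigoplus_t \bigl(F_t(A)\otimes F_t(B)\bigr)$ once we note that arcs of a directed complete cyclic multipartite graph only ever run between consecutive parts. This is really just Theorem~\ref{distributive} applied to the layer decompositions $A=\oplus_t F_t(A)$ and $B=\oplus_t F_t(B)$, after checking that the ``cross terms'' $F_t(A)\otimes F_{t'}(B)$ with $t\ne t'$ are empty because the two factors have no arcs between the same pair of parts.

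With that in hand, the computation is bookkeeping. Expanding the right-hand side via Theorem~\ref{distributive} and the emptiness of cross terms, $H_{(4^k)}(\alpha,\beta)\otimes H_x(i,i')\otimes H_y(j,j')$ equals $\bigoplus_{t=1}^{n} F_t\bigl(H_{(4^k)}(\alpha,\beta)\bigr)\otimes F_t\bigl(H_x(i,i')\bigr)\otimes F_t\bigl(H_y(j,j')\bigr)$. Now I would compare layer by layer: for $1\le t\le n-1$, each of the three $H$'s agrees on layer $F_t$ with the corresponding $T$ (since $H_\ast(\cdot,\cdot)=T_\ast(\cdot)\oplus F_n(T_\ast(\cdot))\oplus F_n(T_\ast(\text{other}))$ only alters the layer $F_n$), so the $t$-th summand is $F_t(T_{(4^k)}(\alpha))\otimes F_t(T_x(i))\otimes F_t(T_y(j))=F_t\bigl(T_{(4^kxy)}(\alpha,i,j)\bigr)$, matching the first two blocks of the left-hand side. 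For $t=n$, the layer $F_n$ of each $H$ is $F_n(T_\ast(\text{second argument}))$, so the $n$-th summand is $F_n(T_{(4^k)}(\beta))\otimes F_n(T_x(i'))\otimes F_n(T_y(j'))=F_n\bigl(T_{(4^kxy)}(\beta,i',j')\bigr)$, which is exactly the last block of the left-hand side. Summing the layers reproduces $H_{(4^kxy)}(\alpha,i,j)(\beta,i',j')$, completing the proof.

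The only real obstacle is the first step: one must be slightly careful that the partite product as defined does not create arcs between non-consecutive parts, so that the cross terms $F_t(A)\otimes F_{t'}(B)$ genuinely vanish for $t\ne t'$. This follows directly from the edge rule in the definition of $\otimes$ (an edge of $A\otimes B$ requires a simultaneous edge in $A$ and in $B$ on the matching index pair) together with the fact that $\overrightarrow{C}_{(\ast:n)}$ — and hence every subgraph in sight — has arcs only from part $i$ to part $i+1$. Once that is noted, everything else is a direct application of the distributive law already proved in Theorem~\ref{distributive}, and the hypothesis that $x$, $y$, $n$ are odd is not even needed for this identity (it is needed later to control the \emph{lengths} of the resulting cycles, via Lemmas~\ref{productofbalanced} and~\ref{HisforHamilton} and Corollary~\ref{nh4k}).
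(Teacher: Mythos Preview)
Your proposal is correct and follows essentially the same approach as the paper: both arguments expand the right-hand side via the distributive law (Theorem~\ref{distributive}) and use the fact that the partite product respects the layer decomposition $F_t$, so that only ``diagonal'' terms survive. Your write-up is in fact more explicit than the paper's about why the cross terms $F_t(A)\otimes F_{t'}(B)$ vanish for $t\ne t'$, a point the paper leaves implicit when it jumps directly from the triple product of three-term sums to a sum of three triple products; and your observation that the oddness hypotheses play no role in this particular identity is correct (they are only needed so that $T_x$, $T_y$ are defined as in \cite{KP}, and later to control cycle lengths).
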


\begin{proof}
Notice that 
\[
F_n(T_{(4^kxy)}(\alpha,i,j))=F_n(T_{4^k}(\alpha)\otimes   T_x(i)\otimes T_y(j))=F_n(T_{4^{k}}(\alpha))\otimes   F_n(T_x(i))\otimes F_n(T_y(j))).
\]
Notice also that 
\begin{align*}
F_n(T_{4^k}(\alpha)\otimes   T_x(i)\otimes T_y(j))=&F_n(T_{4^k}(\alpha))\otimes  T_x(i)\otimes T_y(j)\\
=&T_{4^k}(\alpha)\otimes F_n(T_x(i))\otimes T_y(j)\\
=&T_{4^k}(\alpha)\otimes  T_x(i)\otimes F_n(T_y(j)).
\end{align*}

Then we have 
\begin{align*}
H_{4^k}(\alpha,\beta)\otimes H_x(i,i')\otimes H_y(j,j') =&\left[T_{4^k}(\alpha)\oplus F_n(T_{4^k}(\alpha))
\oplus F_n(T_{4^k}(\beta))\right]\\
& \otimes \left[T_x(i)\oplus F_n(T_x(i))\oplus F_n(T_x(i'))\right] \otimes \left[T_y(j)\oplus F_n(T_y(j))\oplus F_n(T_y(j'))\right] \\
=&\left[T_{4^k}(\alpha)\otimes T_x(i)\otimes T_y(j)\right]\oplus \left[F_n(T_{4^k}(\alpha))\otimes 
F_n(T_x(i))\otimes F_n(T_y(j))\right]\\
&\oplus \left[F_n(T_{4^k}(\beta))
\otimes F_n(T_x(i'))\otimes F_n(T_y(j'))\right]\\
=&T_{(4^kxy)}(\alpha,i,j)\oplus F_n(T_{(4^kxy)}(\alpha,i,j))\oplus F_n( T_{(4^kxy)}(\beta,i',j'))\\
=&H_{(4^kxy)}(\alpha,i,j)(\beta,i',j').
\end{align*}

\end{proof}

\begin{lemma}
Let $\varphi$ be a permutation of $\mathfrak{R}\times \mathbb{Z}_x\times \mathbb{Z}_y$. Then
\[
\overrightarrow{C}_{(4^kxy:n)}=\bigoplus_{(\alpha,i,j)}H_{(4^kxy)}(\alpha,i,j)\varphi(\alpha,i,j).\]
\end{lemma}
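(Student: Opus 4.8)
The plan is to show that the right-hand side, $\bigoplus_{(\alpha,i,j)} H_{(4^kxy)}(\alpha,i,j)\varphi(\alpha,i,j)$, is a decomposition of $\overrightarrow{C}_{(4^kxy:n)}$ by accounting for every arc exactly once. First I would recall from the definition that $H_{(4^kxy)}(\alpha,i,j)(\beta,i',j')$ uses the arcs of $T_{(4^kxy)}(\alpha,i,j)$ between consecutive parts $t$ and $t+1$ for $0\le t\le n-2$, together with the arcs of $T_{(4^kxy)}(\beta,i',j')$ between parts $n-1$ and $0$. So I need two facts: that the $T_{(4^kxy)}(\alpha,i,j)$ over all triples $(\alpha,i,j)$ partition the arcs of $\overrightarrow{C}_{(4^kxy:n)}$ between parts $t$ and $t+1$ for each $0\le t\le n-2$, and the same for the arcs between parts $n-1$ and $0$.

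The key step is to establish that $\{T_{(4^kxy)}(\alpha,i,j)\}_{(\alpha,i,j)}$ is an arc-disjoint collection of $\overrightarrow{C}_n$-factors whose union is all of $\overrightarrow{C}_{(4^kxy:n)}$. Since $T_{(4^kxy)}(\alpha,i,j) = T_{4^k}(\alpha)\otimes T_x(i)\otimes T_y(j)$, I would use the partite product structure: an arc of $\overrightarrow{C}_{(4^kxy:n)}$ from part $t$ to part $t+1$ is determined by its two endpoints, which under the identification of vertices via the product correspond to a triple of arcs, one in each of $\overrightarrow{C}_{(4^k:n)}$, $\overrightarrow{C}_{(x:n)}$, $\overrightarrow{C}_{(y:n)}$. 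For the $\overrightarrow{C}_{(4^k:n)}$ coordinate, the graphs $T_{4^k}(\alpha)$ as $\alpha$ ranges over $\mathfrak{R}$ give an arc-partition of $\overrightarrow{C}_{(4^k:n)}$ (this is the discussion preceding Lemma~\ref{3h4k}, extended in the proof of Theorem~\ref{4ktheorem}, where the map on each edge between consecutive parts is a bijection $y\mapsto y\pm\alpha$ or $y\mapsto f_\alpha(y)$, so as $\alpha$ varies the target is hit exactly once). The same holds in the $x$ and $y$ coordinates for $T_x(i)$ and $T_y(j)$ by the corresponding results in \cite{KP}. Hence each arc of the product graph lies in exactly one $T_{(4^kxy)}(\alpha,i,j)$, for each of the $n$ ``slots'' of consecutive parts; in particular $\{T_{(4^kxy)}(\alpha,i,j)\}$ partitions the arcs between parts $t$ and $t+1$ for every $t$, including the wraparound slot $t=n-1$.

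Now I would assemble the argument. Fix a part-slot: for $0\le t\le n-2$ the arcs of $H_{(4^kxy)}(\alpha,i,j)\varphi(\alpha,i,j)$ in that slot are exactly the arcs of $T_{(4^kxy)}(\alpha,i,j)$ there, and as $(\alpha,i,j)$ ranges over all triples these cover each arc of $\overrightarrow{C}_{(4^kxy:n)}$ in that slot exactly once. For the wraparound slot $t=n-1$, the arcs of $H_{(4^kxy)}(\alpha,i,j)\varphi(\alpha,i,j)$ there are those of $T_{(4^kxy)}(\varphi(\alpha,i,j))$; since $\varphi$ is a permutation of $\mathfrak{R}\times\mathbb{Z}_x\times\mathbb{Z}_y$, as $(\alpha,i,j)$ ranges over all triples so does $\varphi(\alpha,i,j)$, and therefore these arcs again cover each wraparound arc of $\overrightarrow{C}_{(4^kxy:n)}$ exactly once by the partition property just established. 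Combining across all $n$ slots, $\bigoplus_{(\alpha,i,j)} H_{(4^kxy)}(\alpha,i,j)\varphi(\alpha,i,j)$ contains every arc of $\overrightarrow{C}_{(4^kxy:n)}$ exactly once, which is the claimed equality.

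The main obstacle I anticipate is being careful about the identification of vertices and arcs under the iterated partite product $\otimes$, and verifying rigorously that ``an arc of the product between parts $t$ and $t+1$ decomposes uniquely as a triple of arcs in the factors.'' This uses the definition of $\otimes$ directly, but one must check that the partite structure (which parts are adjacent) is respected and that the three coordinate-projections are jointly a bijection on arc sets, so that arc-partitions in each factor multiply to an arc-partition in the product. Once that bookkeeping is in place, everything else is immediate from the fact that $\varphi$ is a bijection. It is also worth noting that the hypothesis ``$x$, $y$, $n$ odd'' is what makes $T_x(i)$, $T_y(j)$, and $T_{4^k}(\alpha)$ genuine $\overrightarrow{C}_n$-factors via the earlier lemmas, so the product indeed lives inside $\overrightarrow{C}_{(4^kxy:n)}$ with the right part count.
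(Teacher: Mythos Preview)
Your argument is correct and follows essentially the same route as the paper's proof: first establish that the family $\{T_{(4^kxy)}(\alpha,i,j)\}$ decomposes $\overrightarrow{C}_{(4^kxy:n)}$, and then observe that swapping the arcs in the last slot via the permutation $\varphi$ still yields a decomposition. The only cosmetic difference is that the paper obtains the first step in one line by invoking the distributive property of the partite product (Theorem~\ref{distributive}) rather than unpacking the coordinate-wise arc bijections as you do.
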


\begin{proof}
From Theorem~\ref{distributive}, we know that 
\[\overrightarrow{C}_{(4^kxy:n)}=\overrightarrow{C}_{(4^k:n)}\otimes   
\overrightarrow{C}_{(x:n)}\otimes \overrightarrow{C}_{(y:n)}=
\left(\bigoplus_{\alpha} T_{4^k}(\alpha)\right)\otimes  \left(\bigoplus_{i}
T_{x}(i)\right)\otimes \left(\bigoplus_j T_{y}(j)\right).\] 
By the definition of $T_{(4^kxy)}(\alpha,i,j)$
we get 
\[
\overrightarrow{C}_{(4^kxy:n)}=\bigoplus_{(\alpha,i,j)}T_{(4^kxy)}(\alpha,i,j).\]

We also have
\[
\bigoplus_{(\alpha,i,j)}T_{(4^kxy)}(\alpha,i,j)=
\bigoplus_{(\alpha,i,j)}H_{(4^kxy)}(\alpha,i,j)\varphi(\alpha,i,j).
\]

Combining both we get:
\[
\overrightarrow{C}_{(4^kxy:n)}=\bigoplus_{(\alpha,i,j)}H_{(4^kxy)}(\alpha,i,j)\varphi(\alpha,i,j),
\]
as we wanted to prove.
\end{proof}

Because we are dealing with bijections on cartesian products of sets, we introduce the 
following notation. If $\varphi$ is a bijection of $S_1\times S_2\times \ldots \times S_n$, then $
\varphi_i(s_1,\ldots,s_n)$ is the $i$-th coordinate of $\varphi(s_1,\ldots,s_n)$.
Because $H_{4^kxy}(\alpha,i,j)\varphi(\alpha,i,j)=H_{4^k}(\alpha,\varphi_1(\alpha,i,j))\otimes 
H_x(i,\varphi_2(\alpha,i,j))\otimes H_y(j,\varphi_3(\alpha,i,j))$, 
we have:

\begin{itemize}
\item If $\alpha=\varphi_1(\alpha,i,j)$, $i=\varphi_2(\alpha,i,j)$ and 
$\gcd(y,j-\varphi_3(\alpha,i,j))=1$ (or $y=1$), then  
$H_{4^k}(\alpha,\varphi_1(\alpha,i,j))$ is a 
$\overrightarrow{C}_n$-factor of $\overrightarrow{C}_{(4^k:n)}$ by Corollary \ref{nh4k},
$H_x(i,\varphi_2(\alpha,i,j))$ is a 
$\overrightarrow{C}_n$-factor of $\overrightarrow{C}_{(x:n)}$ by Lemma \ref{bbb}, $H_y(j,
\varphi_3(\alpha,i,j))$ is a 
$\overrightarrow{C}_{yn}$-factor of $\overrightarrow{C}_{(y:n)}$ by Lemma \ref{HisforHamilton}, 
and $H_{4^kxy}(\alpha,i,j)\varphi(\alpha,i,j)$ 
is a $\overrightarrow{C}_{yn}$-factor of $\overrightarrow{C}_{(4^kxy:n)}$ 
by Lemma \ref{productofbalanced}.

\item If $\alpha-\varphi_1(\alpha,i,j)\in\{\pm 1,\pm x,\pm x\pm 1\}$, 
$\gcd(x,i-\varphi_2(\alpha,i,j))=1$ (or $x=1$), and $j=\varphi_3(\alpha,i,j)$ 
then  $H_{4^k}(\alpha,\varphi_1(\alpha,i))$ is a 
$\overrightarrow{C}_{2^kn}$-factor of $\overrightarrow{C}_{(4^k:n)}$  by Corollary \ref{nh4k}, $H_x(i,
\varphi_2(\alpha,i))$ is a $
\overrightarrow{C}_{xn}$-factor of $\overrightarrow{C}_{(x:n)}$ by Lemma \ref{HisforHamilton}, 
$H_y(j,\varphi_3(\alpha,i,k))$ is a $\overrightarrow{C}_{n}$-factor of $\overrightarrow{C}_{(y:n)}$ by Lemma~\ref{bbb} and 
$H_{4^kxy}(\alpha,i,j)\varphi(\alpha,i,j)$ 
is a $\overrightarrow{C}_{2^kxn}$-factor of $\overrightarrow{C}_{(4^kxy:n)}$ by Lemma 
\ref{productofbalanced}.
\end{itemize}

For a decomposition of $\overrightarrow{C}_{(4^kxy:n)}$ into $\overrightarrow{C}_{2^kxn}$-factors 
and $\overrightarrow{C}_{yn}$-factors we need a bijection $\varphi$ of $\mathfrak{R}\times \mathbb{Z}_x
\times \mathbb{Z}_y$ that satisfies:
\begin{conditions}\label{2xnandyn}
\begin{enumerate}[a)]
\item For all $(\alpha,i,j)$, $\alpha-\varphi_1(\alpha,i,j)\in\{\pm 1,\pm x,\pm x\pm 1\}$ or 
$\alpha=\varphi_1(\alpha,i,j)$.
\item If $\alpha=\varphi_1(\alpha,i,j)$, then $i=\varphi_2(\alpha,i,j)$ and 
$\gcd(y,j-\varphi_{3}(\alpha,i,j))=1$ (or $y=1$).
\item If $\alpha-\varphi_1(\alpha,i,j)\in\{\pm 1,\pm x,\pm x\pm 1\}$, then 
$\gcd(x,i-\varphi_2(\alpha,i,j))=1$ (or $x=1$), and $j=\varphi_3(\alpha,i,j)$.
\end{enumerate}
\end{conditions}

Define the bijection $\theta:\mathfrak{R}\times \mathbb{Z}_x\times\mathbb{Z}_y\rightarrow 
\mathbb{Z}_{2^{k-1}}\times \mathbb{Z}_{2^{k-1}}\times \mathbb{Z}_{4} \times 
\mathbb{Z}_x\times\mathbb{Z}_y$ by $\theta(\alpha,i,j)=(\pi(\alpha),i,j)$. Then finding such a function 
$\varphi$ is equivalent to finding $4^{k-1}$ functions $\lambda^{(\dot{a},\dot{b})}$ of 
$\mathbb{Z}_4\times\mathbb{Z}_x\times \mathbb{Z}_y$ satisfying Conditions \ref{2xnandynproj} and
\[
\sum_{\dot{a},\dot{b}} |\{(\gamma,i,j)|\gamma=\lambda^{(\dot{a},\dot{b})}_1(\gamma,i,j)\}|=
|\{(\gamma,i,j)|\gamma=\varphi_1(\gamma,i,j)\}|.
\]

\begin{conditions}\label{2xnandynproj}
\begin{enumerate}[a)]
\item If $\gamma=\lambda^{(\dot{a},\dot{b})}_1(\gamma,i,j)$, then 
$i=\lambda^{(\dot{a},\dot{b})}_2(\gamma,i,j)$ and $\gcd(y,j-\lambda^{(\dot{a},\dot{b})}_3(\gamma,i,j))=1$
(or $y=1$).
\item If $\gamma\neq \lambda^{(\dot{a},\dot{b})}_1(\gamma,i,j)$, then 
$\gcd(x,i-\lambda^{(\dot{a},\dot{b})}_{2}(\gamma,i,j))=1$ (or $x=1$) and
$j=\lambda^{(\dot{a},\dot{b})}_3(\gamma,i,j)$.
\end{enumerate}
\end{conditions}

The existence of such bijections was shown in Lemma $7.17$ of $\cite{KP}$ (Lemma $7.11$ if $y=1$, Lemma $7.12$ if $x=1$). 
 Hence we have:

\begin{lemma}\label{2xnandynlemma}
Let $s_p\in \{0,2,3,\ldots,4^kxy-3,4^kxy-2,4^kxy\}$. Then there exists a decomposition of $
\overrightarrow{C}_{(4^kxy:n)}$ into $s_p$ $\overrightarrow{C}_{2^{k}xn}$-factors and $r_p=4^kxy-s_p$ $
\overrightarrow{C}_{yn}$-factors.
\end{lemma}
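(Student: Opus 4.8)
The plan is to reduce the statement to a purely combinatorial problem about permutations of the finite set $\mathbb{Z}_4\times\mathbb{Z}_x\times\mathbb{Z}_y$, and then invoke the permutation-existence results of \cite{KP} that have already been quoted. Concretely, I would argue as follows. By the bijection $\theta(\alpha,i,j)=(\pi(\alpha),i,j)$ discussed just before the statement, producing a bijection $\varphi$ of $\mathfrak{R}\times\mathbb{Z}_x\times\mathbb{Z}_y$ satisfying Conditions~\ref{2xnandyn} is equivalent to producing, for each of the $4^{k-1}$ choices of $(\dot a,\dot b)\in\mathbb{Z}_{2^{k-1}}\times\mathbb{Z}_{2^{k-1}}$, a bijection $\lambda^{(\dot a,\dot b)}$ of $\mathbb{Z}_4\times\mathbb{Z}_x\times\mathbb{Z}_y$ satisfying Conditions~\ref{2xnandynproj}; the two conditions match coordinate-by-coordinate once one checks that $\pi$ sends the additive-order-$2^k$ shifts $\{\pm1,\pm x,\pm x\pm1\}$ in $\mathfrak{R}$ exactly to the differences that change the $\mathbb{Z}_4$-coordinate while fixing the first two coordinates (this is the observation recorded above Lemma~\ref{decomCv3}).

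Next I would track the count of $\overrightarrow{C}_{2^kxn}$-factors. From the bulleted dichotomy preceding Conditions~\ref{2xnandyn}, each triple $(\alpha,i,j)$ with $\alpha=\varphi_1(\alpha,i,j)$ contributes a $\overrightarrow{C}_{yn}$-factor and each triple with $\alpha\neq\varphi_1(\alpha,i,j)$ contributes a $\overrightarrow{C}_{2^kxn}$-factor, so the number $s_p$ of $\overrightarrow{C}_{2^kxn}$-factors equals $4^kxy$ minus the number of fixed points of $\varphi$ in its first coordinate, i.e. $s_p=4^kxy-\sum_{\dot a,\dot b}\big|\{(\gamma,i,j):\gamma=\lambda^{(\dot a,\dot b)}_1(\gamma,i,j)\}\big|$. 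Thus it suffices to know, for every target value $r_p=4^kxy-s_p$ in the allowed list $\{0,2,3,\dots,4^kxy-3,4^kxy-2,4^kxy\}$, that one can choose the $4^{k-1}$ bijections $\lambda^{(\dot a,\dot b)}$ satisfying Conditions~\ref{2xnandynproj} whose total number of first-coordinate fixed points is exactly $r_p$. This is precisely Lemma~$7.17$ of \cite{KP} in the generic case, with Lemma~$7.11$ of \cite{KP} handling $y=1$ and Lemma~$7.12$ handling $x=1$; each single $\lambda^{(\dot a,\dot b)}$ can realize a count in $\{0,2,3,\dots,4xy-3,4xy-2,4xy\}$ (the value $1$ being the lone exclusion, as a permutation of a set of this size cannot have exactly one more fixed point without violating the gcd-coprimality constraints), and summing $4^{k-1}$ such values covers the stated list for $4^kxy$.

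Having fixed such a $\varphi$, I would then simply assemble the decomposition: by the lemma stating $\overrightarrow{C}_{(4^kxy:n)}=\bigoplus_{(\alpha,i,j)}H_{(4^kxy)}(\alpha,i,j)\varphi(\alpha,i,j)$, together with the factorization $H_{(4^kxy)}(\alpha,i,j)\varphi(\alpha,i,j)=H_{(4^k)}(\alpha,\varphi_1)\otimes H_x(i,\varphi_2)\otimes H_y(j,\varphi_3)$ and Lemma~\ref{productofbalanced}, each summand is a $\overrightarrow{C}_{yn}$-factor or a $\overrightarrow{C}_{2^kxn}$-factor exactly as classified above, giving the claimed decomposition into $s_p$ $\overrightarrow{C}_{2^kxn}$-factors and $r_p=4^kxy-s_p$ $\overrightarrow{C}_{yn}$-factors. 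The main obstacle is the combinatorial heart---verifying that the quoted \cite{KP} lemmas really do supply bijections with every admissible fixed-point count and that, after dividing by $\pi$, the resulting $\varphi$ genuinely meets Conditions~\ref{2xnandyn} (in particular that the first-coordinate behaviour lands in $\{\pm1,\pm x,\pm x\pm1\}$); since these are imported results this reduces to a careful bookkeeping check rather than new work, but it is where all the content sits. Everything else is formal manipulation of the partite product already set up in Sections~\ref{section even case} and~\ref{section multivariate bijections}.
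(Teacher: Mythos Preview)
Your proposal is correct and follows essentially the same route as the paper: reduce via $\theta$ to choosing $4^{k-1}$ bijections $\lambda^{(\dot a,\dot b)}$ of $\mathbb{Z}_4\times\mathbb{Z}_x\times\mathbb{Z}_y$ satisfying Conditions~\ref{2xnandynproj}, invoke Lemmas~7.11/7.12/7.17 of \cite{KP} to realize each admissible fixed-point count, and then set $\varphi=\theta^{-1}\lambda\theta$ to obtain the decomposition. One cosmetic slip: your parenthetical says ``the value $1$ being the lone exclusion,'' but the set you wrote correctly omits both $1$ and $4xy-1$; and note that the paper's own proof labels the fixed-point count as $s_p$ whereas you (consistently with the bulleted dichotomy) identify it with $r_p$---since the allowed set is symmetric under $t\mapsto 4^kxy-t$, this is immaterial.
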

\begin{proof}
Let $s_p=\sum_{\dot{a},\dot{b}}s_{\dot{a},\dot{b}}$, with $s_{\dot{a},\dot{b}}\in\{0,2,\ldots,4xy\}$. By 
Lemma $7.17$ of $\cite{KP}$ (Lemma $7.11$ if $y=1$, Lemma $7.12$ if $x=1$), for each pair 
$\dot{a},\dot{b}$ there exists a permutation $\lambda^{(\dot{a},
\dot{b})}$ of 
$\mathbb{Z}_4\times \mathbb{Z}_x\times \mathbb{Z}_y$ satisfying Conditions \ref{2xnandynproj} such that
\[
\gamma=\lambda^{(\dot{a},\dot{b})}_1(\gamma,i,j)
\]
holds for $s_{\dot{a},\dot{b}}$ elements $(\gamma,i,j)$.

Then $\lambda(\dot{a},\dot{b},\gamma,i,j)=(\dot{a},\dot{b},\lambda^{(\dot{a},\dot{b})}(\gamma,i,j))$ is a 
permutation of $\mathbb{Z}_{2^{k-1}}\times 
\mathbb{Z}_{2^{k-1}}\times \mathbb{Z}_{4} \times \mathbb{Z}_x \times \mathbb{Z}_y$, 
and $\varphi=\theta^{-1}\lambda\theta$ is a 
permutation of $\mathfrak{R}\times \mathbb{Z}_x\times \mathbb{Z}_y$ 
satisfying Conditions \ref{2xnandyn}, with 
\[
s_p=\sum_{\dot{a},\dot{b}}(s_{\dot{a},\dot{b}})
\]
pairs satisfying $\alpha=\varphi_1(\alpha,i,j)$.
\end{proof}

\section{Main Results}\label{section main results}
The complete solution to the uniform case of the Oberwolfach problem will be vital to the proof of our main result.
\begin{theorem}[\cite{AH,ASSW,HS,RW}]
\label{OP}
$K_v$ can be decomposed into $C_m$-factors (and a $1$-factor is $v$ is even) if and
only if $v \equiv 0 \pmod{m}$, $(v,m) \not = (6,3)$ and $(v,m) \not = (12,3)$.
\end{theorem}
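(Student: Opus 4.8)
The statement is the classical solution of the uniform Oberwolfach problem, and a complete proof is distributed across \cite{AH,ASSW,HS,RW}; here I sketch the structure one would follow. For necessity, a $2$-factorization of $K_v$ forces $v$ odd (every vertex has even degree $v-1$), so when $v$ is even one first removes a $1$-factor and works in the $(v-2)$-regular graph $K_v-I$; a $C_m$-factor on $v$ vertices exists only if $m\mid v$ and $m\geq 3$; and the two small cases $(v,m)\in\{(6,3),(12,3)\}$ are excluded by the non-existence of nearly Kirkman triple systems of those orders, which one verifies by a short counting/exhaustive argument. The substance is sufficiency, and the natural plan is to write $v=mt$, partition the vertex set into $t$ groups of size $m$, and decompose separately the \emph{within-group} edges (which form $t$ disjoint copies of $K_m$) and the \emph{between-group} edges (which form the complete equipartite graph $K_{(m:t)}$).

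For the within-group edges one uses the Walecki-type Hamiltonian decompositions of $K_m$: if $m$ is odd, $K_m$ splits into $\tfrac{m-1}{2}$ Hamiltonian cycles, and if $m$ is even, $K_m$ splits into $\tfrac{m-2}{2}$ Hamiltonian cycles together with a $1$-factor. Doing this simultaneously in all $t$ copies yields $\tfrac{m-1}{2}$ (resp.\ $\tfrac{m-2}{2}$) $C_m$-factors of $K_v$, and in the even case the $t$ leftover matchings assemble into a single $1$-factor $I$ of $K_v$. For the between-group edges one applies Theorem~\ref{equip}: when $t\geq 3$ and the hypotheses hold, $K_{(m:t)}$ has a resolvable $C_m$-factorization into $\tfrac{(t-1)m}{2}$ $C_m$-factors. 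A short arithmetic check gives $\tfrac{m-1}{2}+\tfrac{(t-1)m}{2}=\tfrac{v-1}{2}$ for $v$ odd and $\tfrac{m-2}{2}+\tfrac{(t-1)m}{2}=\tfrac{v-2}{2}$ for $v$ and $m$ even, matching the required number of factors; and one checks that Liu's excluded triples never arise for $v$ odd (then $m,t$ are odd), while for $m$ even the construction fails only at $(v,m)=(12,6)$, which is then handled by a direct ad hoc factorization. The small orders $t\leq 2$, i.e.\ $v=m$ or $v=2m$, are also treated directly.

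The one case this split does not immediately settle is $v$ even with $m$ odd, where $t=v/m$ is even: now the within-group copies of $K_m$ decompose completely into Hamiltonian cycles leaving no matching, the graph $K_{(m:t)}$ is itself regular of odd degree $(t-1)m$, and so the $1$-factor $I$ must be extracted from the between-group edges. One therefore needs a resolvable $C_m$-factorization of $K_{(m:t)}-I$ into $\tfrac{(t-1)m-1}{2}$ $C_m$-factors, a statement slightly stronger than Theorem~\ref{equip}, and this is exactly where the genuine exceptions live: $K_{(3:2)}-I$ is a single $6$-cycle, and a triangle-factorization of $K_{(3:4)}-I$ is obstructed by the non-existence of a nearly Kirkman triple system of order $12$. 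I expect this to be the main obstacle: securing resolvable $C_m$-factorizations of $K_{(m:t)}-I$, together with the $m=3$ instances (the Kirkman schoolgirl problem and nearly Kirkman triple systems), is the hard core handled in \cite{AH,ASSW,HS,RW}, with the remaining finitely many small orders dispatched by explicit constructions.
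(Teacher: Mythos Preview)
The paper does not prove Theorem~\ref{OP} at all: it is quoted as a classical result from \cite{AH,ASSW,HS,RW} and invoked as a black box in the proof of the main theorem, so there is no ``paper's own proof'' to compare against. Your proposal is therefore not a reproduction of anything in the paper but an independent sketch of how the cited literature establishes the result.

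As a sketch, your outline is broadly sound and correctly isolates the genuine difficulty (the case $v$ even, $m$ odd, where a $1$-factor must be pulled from the equipartite piece, and where the exceptions $(6,3)$ and $(12,3)$ arise). One point worth flagging: you lean on Theorem~\ref{equip}, Liu's 2003 result, to handle the between-group edges, but the papers \cite{AH,ASSW,HS,RW} all predate it (1971--1991), so this reconstruction is anachronistic and does not reflect how the original proofs actually proceed; those use starter-type and difference constructions directly. More seriously, Liu's proof of Theorem~\ref{equip} in \cite{Liu2} itself invokes the uniform Oberwolfach solution as an ingredient, so using it here risks circularity. If you want a self-contained argument you should either replace the appeal to Theorem~\ref{equip} with a direct construction of resolvable $C_m$-factorizations of $K_{(m:t)}$ (which is in fact the easier special case $h=m$), or simply acknowledge, as the paper does, that this theorem is being imported wholesale from the cited references.
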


We now apply the results from Section~\ref{section multivariate bijections} to produce the following important result for the
uniform equipartite version of the Hamilton-Waterloo problem where the two factor types consist
of cycle sizes of distinct parities.

\begin{theorem}\label{lemmausingliu}
Let $x,y,z,v,m,k$ be positive integers $v,m,k\geq 3$ satisfying the following:
\begin{enumerate}[i)]
\item $v,m\geq 3$,
\item $k\geq 2$,
\item $x,y,z$ odd,
\item $z\geq 3$,
\item $\gcd(x,y)=1$,
\item $vm\equiv 0 \pmod{4^kxyz}$, $v\equiv 0\pmod{4^kxy}$,
\item $\frac{v(m-1)}{4^kxy}$ is even,
\item $\left(\frac{v}{4^kxy},m,z\right) \not \in \{(2,3,3),(6,3,3),(2,6,3),(6,2,6)\}$
\end{enumerate}
then there is a decomposition of $K_{(v:m)}$ into $r$ $C_{2^kxz}$-factors and $s$ $C_{yz}$-factors, for 
any $s,r\neq 1$.
\end{theorem}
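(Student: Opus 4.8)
The plan is to combine the equipartite directed-cycle decompositions of Section~\ref{section multivariate bijections} with Liu's equipartite $C_m$-factorization (Theorem~\ref{equip}) via the transfer lemma, Lemma~\ref{cvntokvm}. First I would set $n_1 = z$ and apply Theorem~\ref{equip} to $K_{(z:m)}$: by hypotheses (iv), (i) and the exclusion in (viii) (with the parameter $\frac{v}{4^kxy}$ playing the role of $h$ only indirectly — more precisely, one needs $z \mid zm$, which is automatic, $z(m-1)$... actually here the relevant divisibility is $m \mid$ the number of vertices), one obtains a resolvable $C_z$-factorization of $K_{(z:m)}$, i.e.\ a decomposition of $K_m$ into $C_z$-factors in the language of Lemma~\ref{cvntokvm} — so here $p=1$, $n_1 = z$, and each "$[n_1]$-factor" of $K_m$ is a $C_z$-factor. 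The exceptional tuples in hypothesis (viii) are exactly (a reindexing of) Liu's exceptions, which is why they must be excluded.

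Next I would invoke Lemma~\ref{2xnandynlemma} with $n$ replaced by $z$: since $x,y$ are odd (iii) and $z$ is odd (iii) and $\ge 3$ (iv), that lemma produces, for each $t$, a decomposition of $\overrightarrow{C}_{(4^kxy:z)}$ into $s_t$ $\overrightarrow{C}_{2^kxz}$-factors and $r_t$ $\overrightarrow{C}_{yz}$-factors, where $s_t$ ranges over $\{0,2,3,\ldots,4^kxy-3,4^kxy-2,4^kxy\}$ (everything except $1$ and $4^kxy-1$). Undirecting, $C_{(4^kxy:z)}$ decomposes into $s_t$ $C_{2^kxz}$-factors and $r_t$ $C_{yz}$-factors. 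Setting $v' = 4^kxy$ and using hypothesis (vi) ($v \equiv 0 \pmod{4^kxy}$, with $vm \equiv 0 \pmod{4^kxyz}$ guaranteeing the arithmetic matches up inside $K_{(v:m)}$) together with the parity condition (vii) that $\frac{v(m-1)}{4^kxy}$ is even (this is the "$h(u-1)$ even" hypothesis of Liu, needed so that $K_{(v:m)}$ actually admits the relevant resolvable factorization), I can feed these two families into Lemma~\ref{cvntokvm}. Concretely one blows up each vertex of $K_{(z:m)}$ by a factor of $\frac{v}{z m}\cdot$(appropriate size) — or rather applies Lemma~\ref{cvntokvm} directly with the "$v$" of that lemma set to $4^kxy$ — to convert the $C_z$-factorization of $K_m$ plus the $\overrightarrow{C}_{(4^kxy:z)}$-decompositions into a decomposition of $K_{(4^kxy \cdot ?:m)}$; bookkeeping the vertex counts shows this is exactly $K_{(v:m)}$.

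The main bookkeeping obstacle — and the step I would be most careful about — is reconciling the two "sizes" of $r$ and $s$: Lemma~\ref{cvntokvm} gives $s = \sum_{t=1}^{(m-1)/2} s_t$ and $r = \sum_{t=1}^{(m-1)/2} r_t$ with each individual $s_t$ forbidden from equalling $1$ or $4^kxy-1$, so I must check that every target pair $(r,s)$ with $r+s = \frac{(m-1)}{2}\cdot 4^kxy$ and $r,s \neq 1$ can be written as such a sum. Since each $s_t$ can be $0$ or $2$ (and anything up to $4^kxy$ except $1, 4^kxy-1$), and there are $(m-1)/2 \ge 1$ summands, a short case analysis — handling small $s$ by taking one summand equal to $s$ itself when $s \notin\{1, 4^kxy-1\}$, and the value $4^kxy-1$ by splitting as $(4^kxy-2)+\cdots$, which requires $m \ge 5$ or a separate argument when $m=3$ — shows the full range $\{0\} \cup \{2,3,\ldots\} $ minus $\{1\}$ is achievable for $s$, and symmetrically for $r$. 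Hypothesis (ii), $k\ge 2$, ensures $4^kxy \ge 16$ so there is plenty of room; hypothesis (viii) removes precisely the cases where the underlying Liu factorization fails. Assembling these pieces yields the claimed decomposition of $K_{(v:m)}$ into $r$ $C_{2^kxz}$-factors and $s$ $C_{yz}$-factors for all $r,s \neq 1$.
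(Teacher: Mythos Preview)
Your overall strategy---Liu's theorem plus a blowup plus Lemma~\ref{2xnandynlemma}, glued together in the spirit of Lemma~\ref{cvntokvm}---is exactly the paper's, but you apply Liu's theorem to the wrong graph, and this is not a cosmetic slip. You oscillate between $K_m$, $K_{(z:m)}$, and something you write as $K_{(4^kxy\cdot ?:m)}$, never settling. None of the first two choices works in general: decomposing $K_m$ into $C_z$-factors requires $z\mid m$, which is not assumed; decomposing $K_{(z:m)}$ into $C_z$-factors requires $z(m-1)$ even, hence $m$ odd (since $z$ is odd), again not assumed---and even then blowing $K_{(z:m)}$ up does not give $K_{(v:m)}$, because we only know $z\mid vm$, not $z\mid v$. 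This is also why your summand count ``$(m-1)/2$'' and the associated case analysis are off.

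The correct target for Theorem~\ref{equip} is $K_{(v_1:m)}$ with $v_1=v/(4^kxy)$. Hypotheses (vi), (vii), (viii) are phrased precisely so that Liu's conditions hold for this graph: $z\mid v_1m$, $v_1(m-1)$ even, and the exceptional triples $(v_1,m,z)$ excluded. One then blows up each vertex by $4^kxy$, landing exactly in $K_{(v:m)}$; each of the $\tfrac{v_1(m-1)}{2}$ $C_z$-factors becomes $\tfrac{v_1m}{z}$ disjoint copies of $C_{(4^kxy:z)}$, and Lemma~\ref{2xnandynlemma} decomposes each such layer into $s_p$ $C_{2^kxz}$-factors and $r_p$ $C_{yz}$-factors with $s_p,r_p\neq 1$. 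The bookkeeping you describe then goes through with $\tfrac{v_1(m-1)}{2}$ summands rather than $\tfrac{m-1}{2}$. Once you fix the base graph, your sketch coincides with the paper's proof.
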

\begin{proof}
Let $v_1=v/4^kxy$. Consider $K_{(v_1:m)}$. Item $vi$ ensures that $z$ divides $v_{1}m$; and items
$vii$, $i$, and $viii$ give us $v_1(m-1)$ is even, 
$m\neq 2$, and $\left(\frac{v}{4^kxy},m,z\right) \not \in \{(2,3,3)$, $(6,3,3), (2,6,3), (6,2,6)\}$.
Thus by Theorem \ref{equip} there is a decomposition of 
$K_{(v_1:m)}$ into $C_{z}$-factors.

Give weight $4^kxy$ to the vertices in $K_{(v_1:m)}$, which yields $K_{(v:m)}$. 
Even more, each $C_{z}$-factor becomes a copy of $\frac{v_1m}{z}C_{(4^kxy:z)}$. 
By Lemma \ref{2xnandynlemma}, we have that each $\frac{v_1m}{z}C_{(4^kxy:z)}$ 
can be decomposed into $r_p$ $C_{2^{k}xz}$-factors and 
$s_p$ $C_{yz}$-factors as long as $r_p,s_p\neq 1$. Choosing $s_p$ such that $\sum_p s_p=s$ and
$s_p,r_p\neq 1$, provides a decomposition of $K_{(v:m)}$ into $r$ $C_{2^{k}xz}$-factors and 
$s$ $C_{yz}$-factors by Lemma~\ref{cvntokvm}
\end{proof}

The next lemma, given in \cite{KP} shows how to find solutions to the Hamilton-Waterloo problems by
combining solutions for the problem on
complete graphs and solutions for the problem on equipartite graphs.

\begin{lemma}[\cite{KP}]\label{buildcompletegraphnonuniform}
Let $m$ and $v$ be positive integers. Let $F_1$ and $F_2$ be two $2$-factors on $vm$ vertices. Suppose the 
following conditions are satisfied:
\begin{itemize}
\item There exists a decomposition of $K_{(v:m)}$ into $s_\alpha$ copies of $F_1$ and $r_\alpha$ copies of 
$F_2$.
\item There exists a decomposition of $mK_v$ into $s_\beta$ copies of $F_1$ and $r_\beta$ copies of 
$F_2$.
\end{itemize}

Then there exists a decomposition of $K_{vm}$ into $s=s_\alpha+s_\beta$ copies of $F_1$ and $r=r_\alpha+r_
\beta$ copies of $F_2$.
\end{lemma}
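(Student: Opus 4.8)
The plan is to realize $K_{vm}$ as an edge-disjoint union $K_{(v:m)}\oplus mK_v$ and then simply overlay the two given $2$-factorizations. First I would fix a vertex set $V$ with $|V|=vm$ and partition it into $m$ blocks $V_1,\dots,V_m$, each of size $v$. Every edge of $K_{vm}$ joins either two vertices of a single block or two vertices of different blocks; the edges of the first kind form $m$ vertex-disjoint copies of $K_v$ (one spanning each $V_i$), that is, a copy of $mK_v$, while the edges of the second kind form precisely a complete multipartite graph $K_{(v:m)}$ with the $V_i$ as parts. Hence $E(K_{vm})$ is the disjoint union $E(K_{(v:m)})\sqcup E(mK_v)$, and the counts are consistent since $m\binom{v}{2}+\binom{m}{2}v^2=\binom{vm}{2}$.

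Next I would perform the overlay. By hypothesis there is a decomposition $\mathcal{D}_\alpha$ of this copy of $K_{(v:m)}$ into $s_\alpha$ copies of $F_1$ and $r_\alpha$ copies of $F_2$, and a decomposition $\mathcal{D}_\beta$ of this copy of $mK_v$ into $s_\beta$ copies of $F_1$ and $r_\beta$ copies of $F_2$; since $F_1$ and $F_2$ are $2$-factors on all of $V$, every member of $\mathcal{D}_\alpha\cup\mathcal{D}_\beta$ is a spanning $2$-regular subgraph of $K_{vm}$. The members of $\mathcal{D}_\alpha$ are pairwise edge-disjoint and cover $E(K_{(v:m)})$; the members of $\mathcal{D}_\beta$ are pairwise edge-disjoint and cover $E(mK_v)$; and every member of $\mathcal{D}_\alpha$ is edge-disjoint from every member of $\mathcal{D}_\beta$ because $E(K_{(v:m)})\cap E(mK_v)=\varnothing$. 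Thus $\mathcal{D}_\alpha\cup\mathcal{D}_\beta$ is a collection of pairwise edge-disjoint $2$-factors of $K_{vm}$ whose edge sets partition $E(K_{vm})$, i.e.\ a decomposition of $K_{vm}$, and it contains $s_\alpha+s_\beta$ copies of $F_1$ and $r_\alpha+r_\beta$ copies of $F_2$, as required.

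There is no real obstacle here: the lemma amounts to the identity $K_{vm}=K_{(v:m)}\oplus mK_v$ together with additivity of decompositions across an edge partition. The only point deserving a word of care is that the two given decompositions must be of the \emph{same} labelled subgraphs sitting inside a common copy of $K_{vm}$ (equivalently, the $2$-factors named $F_1$ and $F_2$ must be identified with the same graphs on $V$ in both decompositions); if the inputs are only known up to isomorphism, one first chooses bijections identifying $V$ with the vertex sets of the $F_i$ and only then selects the block partition $V=V_1\cup\dots\cup V_m$, after which the argument above applies without change.
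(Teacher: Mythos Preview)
Your argument is correct and is exactly the standard one: write $K_{vm}=K_{(v:m)}\oplus mK_v$ by partitioning the vertex set into $m$ blocks of size $v$, then take the union of the two given $2$-factorizations. The paper does not give its own proof of this lemma (it is quoted from \cite{KP}), but your write-up supplies precisely the natural justification, including the small remark about identifying vertex sets consistently.
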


We are now in a position to provide a proof of the main theorem.

\begin{theorem}
Let $x,y,v,k$ and $m$ be positive integers such that:
\begin{enumerate}[i)]
\item $v,m\geq 3$,
\item $x,y$ are odd,
\item $\gcd(x,y)\geq 3$,
\item $x$ and $y$ divide $v$.
\item $4^k$ divides $v$.
\end{enumerate}
Then there exists a $(2^kx,y)\-\HWP(vm;r,s)$ for every pair $r,s$ with 
$r+s=\left\lfloor(vm-1)/2\right\rfloor$, $r,s\neq 1$.
\end{theorem}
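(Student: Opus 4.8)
The plan is to apply Lemma~\ref{buildcompletegraphnonuniform}, decomposing $K_{vm}$ into a piece coming from the complete equipartite graph $K_{(v:m)}$ and a piece coming from $mK_v$. Write $\gcd(x,y) = g \geq 3$; since $x,y$ are odd so is $g$, and we may take $z = g$ (or any odd divisor of $g$ that is at least $3$, e.g.\ a prime factor of $g$) to play the role of the common odd ``$z$'' in Theorem~\ref{lemmausingliu}. Writing $x = g x'$ and $y = g y'$ with $\gcd(x',y')=1$, the target cycle sizes $2^k x$ and $y$ become $2^k x' z$ and $y' z$ with $\gcd(x',y') = 1$; this is exactly the shape handled by Theorem~\ref{lemmausingliu} (with $x',y'$ in place of $x,y$). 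So I would set the parameters so that $v = 4^k x y v_0$ for an appropriate integer $v_0$ absorbing the remaining factors of $v$, noting $x, y \mid v$ and $4^k \mid v$ give $xy \mid v$ and $4^k \mid v$, and — since $x,y$ are odd — in fact $4^k xy \mid v$.

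First I would handle the equipartite piece. I would verify the hypotheses (i)--(viii) of Theorem~\ref{lemmausingliu} with $z$ as above: oddness of $x',y',z$ is immediate, $z \geq 3$ by choice, $\gcd(x',y')=1$ by construction, the divisibility $vm \equiv 0 \pmod{4^k x' y' z}$ and $v \equiv 0 \pmod{4^k x' y'}$ follow from $4^kxy \mid v$, and the parity condition ``$\tfrac{v(m-1)}{4^k x' y'}$ is even'' and the sporadic exclusion on $(\tfrac{v}{4^k x' y'}, m, z)$ may require adjusting $v_0$ (or invoking a small-cases argument). This yields a decomposition of $K_{(v:m)}$ into $r_\alpha$ copies of a $C_{2^k x' z}$-factor... wait — I actually want factors whose cycle lengths are $2^k x$ and $y$, not $2^k x' z$ and $y' z$; here $x = x' g$ and if $z = g$ then $2^k x' z = 2^k x'g = 2^k x$ and $y' z = y' g = y$, so the cycle sizes match exactly. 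Good: Theorem~\ref{lemmausingliu} gives a $(2^k x, y)$-type decomposition of $K_{(v:m)}$ for all $r_\alpha, s_\alpha \neq 1$ with $r_\alpha + s_\alpha = \tfrac{(m-1)v}{2}$.

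Next the complete-graph piece: I need a $(2^k x, y)$-$\HWP(v; r_\beta, s_\beta)$ for suitable $r_\beta, s_\beta$, i.e.\ a decomposition of $K_v$ (or $K_v - I$) into $C_{2^k x}$-factors and $C_y$-factors. For the two pure cases $s_\beta = 0$ (all $C_{2^k x}$-factors) and $r_\beta = 0$ (all $C_y$-factors) this is exactly Theorem~\ref{OP}, the uniform Oberwolfach problem, whose hypotheses need $2^k x \mid v$ and $y \mid v$ (true) and avoidance of $(6,3)$ and $(12,3)$. Mixed values of $(r_\beta, s_\beta)$ for $K_v$ would require an additional Hamilton--Waterloo input; but since the equipartite piece already realizes a full interval of $(r_\alpha,s_\alpha)$ with $r_\alpha + s_\alpha = \tfrac{(m-1)v}{2}$, I only need the complete-graph piece to contribute one extra ``block'' of size $\tfrac{v-1}{2}$ (or $\tfrac{v-2}{2}$), so using just the two pure Oberwolfach decompositions $(r_\beta,s_\beta) \in \{(\lfloor\tfrac{v-1}{2}\rfloor, 0),\, (0, \lfloor\tfrac{v-1}{2}\rfloor)\}$ and combining via Lemma~\ref{buildcompletegraphnonuniform} already sweeps out all pairs $(r,s)$ with $r + s = \lfloor \tfrac{vm-1}{2}\rfloor$ except those forcing $r_\alpha$ or $s_\alpha$ to equal $1$, which is where the $r,s \neq 1$ caveat enters.

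The main obstacle I expect is bookkeeping at the boundary: ensuring that for every target pair $(r,s)$ with $r,s \neq 1$ one can split $r = r_\alpha + r_\beta$, $s = s_\alpha + s_\beta$ with $(r_\beta, s_\beta)$ a legal Oberwolfach pair and $(r_\alpha, s_\alpha)$ a legal Theorem~\ref{lemmausingliu} pair (both coordinates $\neq 1$), together with discharging the excluded sporadic triples and the parity condition on $\tfrac{v(m-1)}{4^k xy}$ — possibly by a separate direct construction for the finitely many offending parameter values, or by noting $m \geq 3$ gives enough slack. The conceptual content is entirely in the earlier sections; this theorem is the assembly step.
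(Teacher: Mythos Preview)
Your proposal matches the paper's proof essentially line for line: set $z=\gcd(x,y)$, $x'=x/z$, $y'=y/z$, apply Theorem~\ref{lemmausingliu} to $K_{(v:m)}$ and the pure-case Oberwolfach Theorem~\ref{OP} to $mK_v$ (only the two extreme choices $(r_\beta,s_\beta)\in\{(\lfloor(v-1)/2\rfloor,0),(0,\lfloor(v-1)/2\rfloor)\}$ are used), then combine via Lemma~\ref{buildcompletegraphnonuniform}. One small slip to fix: from $x\mid v$ and $y\mid v$ you only get $\operatorname{lcm}(x,y)\mid v$, not $xy\mid v$; however, what Theorem~\ref{lemmausingliu} actually needs in this application is $4^kx'y'\mid v$, and that does follow since $4^k$ and $\operatorname{lcm}(x,y)=zx'y'$ are coprime divisors of $v$ (the parity and sporadic conditions (vii)--(viii) that you flag as needing care are, for better or worse, also glossed over in the paper's own proof).
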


\begin{proof}
Let $r$ and $s$ be positive integers with $r+s=\left\lfloor(vm-1)/2\right\rfloor$ and $r,s\neq 1$.
Write $r=r_{\alpha}+r_{\beta}$ and $s=s_{\alpha}+s_{\beta}$, where $r_{\alpha},r_{\beta},s_{\alpha},s_{\beta}$ are positive
integers that satisfy $r_\alpha,s_\alpha\neq 1$, $r_\alpha+s_\alpha=v(m-1)/2$, $r_\beta+s_\beta=\left\lfloor(v-1)/2\right\rfloor$,
and $r_\beta,s_\beta\in\{0,\left\lfloor(v-1)/2\right\rfloor\}$.

Start by decomposing $K_{vm}$ into $K_{(v:m)}\oplus mK_v$.  
Let $z=\gcd(x,y)$, $x_1=x/z$, $y_1=y/z$. By Theorem \ref{lemmausingliu} there is a decomposition of 
$K_{(v:m)}$ into $r_\alpha$ $C_{2^kx_1z}$-factors and $s_\alpha$ $C_{y_1z}$-factors. This is a 
decomposition of $K_{(v:m)}$ into  $r_\alpha$ $C_{2^kx}$-factors and $s_\alpha$ $C_{y}$-factors.
By Theorem \ref{OP} there is a decomposition of $mK_v$ into $r_\beta$ $C_{2^kx}$-factors and 
$s_\beta$ $C_{y}$-factors. Lemma~\ref{buildcompletegraphnonuniform} shows that 
all of this together yields a decomposition of $K_{vm}$ into $r$ $C_{x}$-factors and $s$ $C_{y}$-factors.
\end{proof}

\section{Bibliography}

\end{document}